\newcommand{\R}{\mathbb{R}}
\newcommand{\Ex}{\mathbb{E}}
\newcommand{\N}{\mathbb{N}}
\newcommand{\Sp}{\mathbb{S}}
\newcommand{\Hy}{\mathbb{H}}
\newcommand{\dx}{{\rm d}}
\newcommand{\indi}{\mathbf{1}}
\renewcommand{\d}{\mathrm{d}}
\DeclareMathOperator{\inti}{int}
\DeclareMathOperator{\bd}{\partial}
\DeclareMathOperator{\diam}{diam}
\DeclareMathOperator{\Vol}{Vol}
\DeclareMathOperator{\Var}{Var}
\DeclareMathOperator{\Vis}{Vis}
\newcommand{\var}{{\rm Var}\,}
\newcommand{\aff}{{\rm aff}\,}
\newcommand{\F}{\mathcal{F}}
\newcommand{\K}{\mathcal{K}}
\newcommand{\M}{\mathcal{M}}
\newcommand{\PP}{\mathbb{P}}
\newtheorem{tetel}{Theorem}
\newtheorem{lemma}{Lemma}
\newtheorem{corollary}{Corollary}
\title[]{Variances and Central Limit Theorems for Random Beta-Polytopes and in Other Geometric Models}
\author[F.~Fodor]{Ferenc Fodor
}
\address{Bolyai Institute,  University of Szeged, Aradi v\'ertan\'uk tere 1, H-6720 Szeged, Hungary}
\email{fodorf@math.u-szeged.hu}
\author[B.~Gr\"unfelder]{Bal\'azs Gr\"unfelder
}
\address{Bolyai Institute, University of Szeged, Aradi v\'ertan\'uk tere 1, H-6720 Szeged, Hungary}
\email{grunfelder.balazs@szte.hu}
\subjclass[2010]{Primary 52A22, Secondary 52A27, 60D05}
\keywords{Beta-distribution, central limit theorems, circumscribed random polytope, economic cap covering, gnomonic projection, non-Euclidean random polytope, random beta-polytope, strong law of large numbers, variance bounds}
\begin{document}
	
	\begin{abstract}
		We prove asymptotic lower and upper bounds with matching orders of magnitude on the variances of the intrinsic volumes and the number of $k$-faces of $d$-dimensional random beta-polytopes. Using Stein's method, we establish central limit theorems for the intrinsic volumes. 
		We also prove asymptotic upper bounds on the variances of the volume and vertex number of spherical random polytopes in spherical convex bodies, and hyperbolic random polytopes in convex bodies in hyperbolic space. Moreover, we consider a circumscribed model on the sphere.  
	\end{abstract}
	
	\maketitle
	
	\section{Introduction and results}
	
	The study of random polytopes generated as the convex hull of random points is a highly diverse field of geometry.
	The number and distribution of the random points, the different concepts of convexity, and the various geometric functionals considered all represent distinct models. For a general overview, detailed history, and references of the extensive literature of this beautiful and rapidly developing topic, see, for example, the surveys by B\'ar\'any \cite{Ba08}, Reitzner \cite{R10}, Schneider \cite{Sch18}, Weil and Wieacker \cite{WW93}, and the book by Schneider and Weil \cite{SW08}.   
	
	In this paper, we focus on rotationally symmetric distributions, such as beta-type distributions in $\R^d$ and the uniform distributions on the unit sphere $\Sp^{d}$ and in hyperbolic space $\Hy^d$. 
	
	\subsection*{Random beta-polytopes}
	First, we consider the so-called beta-distributions in $\R^d$. For $\beta>-1$, we say that a random point in the interior of the $d$-dimensional unit ball $B^d$ is beta-distributed if its probability density function is 
	\begin{equation*}
		f_{d,\beta}(x)=c_{d,\beta}(1-\|x\|^2)^\beta, \quad \|x\|<1,\quad \text{ with }\,
		c_{d,\beta}=\frac{\Gamma(\frac d2+\beta+1)}{\pi^{\frac d2}\Gamma(\beta+1)},
	\end{equation*}
	where $\|\cdot\|$ denotes the Euclidean norm and $\Gamma$ is Euler's gamma function.
	
	In \cite{KTT19}, some special cases are highlighted: $\beta=0$ yields the uniform model in which the random points are uniformly distributed in the unit ball; the uniform distribution on the sphere $\Sp^{d-1}$ is the weak limit of the beta-distribution as $\beta\to-1$; and suitably adjusting the beta-distribution leads to the Gaussian distribution.
	
	The asymptotics of the expectation of various geometric quantities, as the number $n$ of points tends to infinity, have been studied, for instance, by Affentranger \cite{A91}, Carnal \cite{C70}, Dwyer \cite{D91}, Eddy and Gale \cite{EG81}, and Hashorva \cite{H11}. For further references and history on random beta-polytopes, see, for instance, \cite{K23}, \cite{KP25}, \cite{KS25}, \cite{KTT19}, \cite{M71}, and the references therein.
	
	In this paper, we consider the following model. 
	Let $x_1,\ldots,x_n$ be i.i.d. random points chosen in $B^d$ according to the beta-distribution for a fixed $\beta>-1$. Denote the convex hull of a set of points $P\subset\R^d$ by $[P]$.
	The convex hull $K_n^\beta=[x_1,\ldots,x_n]$ is called a random beta-polytope.
	The exact expectations of the number of facets and the intrinsic volumes have been determined by Kabluchko, Temesvari, and Th{\"a}le in \cite{KTT19}, among other results.
	We recall that the intrinsic volumes $V_s(K)$, $s=0,\ldots,d$, of a convex body (compact, convex set with nonempty interior) $K$ are defined as the coefficients of the Steiner formula
	$$\Vol_d(K+\varrho B^d)=\sum_{s=0}^{d}\varrho^{d-s}\kappa_{d-s}V_s(K),$$
	where $K+\varrho B^d$ is the Minkowski sum of $K$ and the ball of radius $\varrho\geq0$, and $\kappa_d$ denotes the volume of the $d$-dimensional unit ball.
	In particular, $V_d(K)=\Vol_d(K)$ is the $d$-dimensional volume (Lebesgue measure), $V_0(K)=1$, $V_1$ is a constant times the mean width, and $V_{d-1}$ is half of the surface area.
	For the $s$-dimensional volume ($s\leq d$), we will use $\Vol_s$.
	
	To the best of our knowledge, asymptotic results for the variances of the intrinsic volumes and central limit theorems are not known for general $\beta>-1$ (cf. \cite{KS25}*{Remark 1.4}), only in the uniform case, i.e., $\beta=0$.
	
	When the i.i.d. random points are chosen according to the uniform distribution in $B^d$, K\"ufer \cite{Kuf94} proved upper bound for the variance of the volume of the random polytope.
	Reitzner \cite{R03},\cite{R05} proved lower and upper bounds with matching order of magnitude for the variance of the volume when the uniform random points are chosen in a convex body $K$ with $C^2_+$ smooth boundary.
	B\"or\"oczky, Fodor, Reitzner and V\'igh \cite{BFRV09} proved an upper bound on the variance of the mean width of the random polytope in the case when $K$ has a rolling ball,
	and for convex bodies with $C^2_+$ smooth boundary, B\'ar\'any, Fodor, and V\'igh \cite{BFV10} proved matching lower and upper bounds for the order of magnitude of the variance of the intrinsic volumes of the random polytope.
	Th\"ale, Turchi and Wespi \cite{TTW18} proved central limit theorems for the intrinsic volumes.
	For the number of $k$-dimensional faces $f_k$, ($k\in\{0,1,\ldots,d-1\}$), Reitzner \cite{R05}, \cite{R05b} proved asymptotic lower and upper bounds on the variance with matching orders of magnitude.
	
	
	For real functions $f$ and $g$ that are defined on the same space $I$, we write  $f\ll g$  if there exists a constant $\gamma>0$ such that $|f(x)|\leq \gamma g(x)$ for all $x\in I$. If $f\ll g$ and $g\ll f$, we write $f\approx g$.
	
	One of our main results is the following theorem, which determines the order of magnitude of $\var V_s(K_n^\beta)$.
	\begin{tetel}\label{thm:beta-main}
		For $s=1,\ldots, d$,
		\begin{equation*}
			\var V_s(K_n^\beta)\approx n^{-\frac{d+3}{d+1+2\beta}}.
		\end{equation*}
	\end{tetel}
	
	The upper bound in Theorem~\ref{thm:beta-main} implies a strong law of large numbers, i.e., for $s=1,\ldots,d$, $V_s(K_n^\beta)$ almost surely tends to its expected value, as $n\to\infty$.
	
	
	The lower bound on the variances can be used to prove central limit theorems for $V_s(K_n^\beta)$. 
	Let $d_W(\cdot,\cdot)$ denote the Wasserstein distance of two random variables, defined as
	$$d_W(X,Y):=\sup_{h\in \text{Lip}_1}|\Ex h(X)-\Ex h(Y)|,$$
	where the supremum is taken over all Lipschitz functions $h:\R\to\R$ with Lipschitz constant at most $1$.
	We use the fact that if $G$ is a standard Gaussian random variable and $(W_n)_{n\in\N}$ is a sequence of centered random variables with finite second moments for which $d_W(W_n/\sqrt{\var W_n},G)\to 0$, as $n\to\infty$, then $W_n/\sqrt{\var W_n}$ converges in distribution to $G$.
	
	Similarly to \cite{TTW18}, we use the normal approximation bound based on Stein's method \cite{Ste86}, which had been used by, for instance, Besau and Th\"ale \cite{BT20}, Chatterjee \cite{Cha08}, Fodor and Papv\'ari \cite{FP24}, and Lachi\`eze-Rey and Peccati \cite{LRP17}, for different models.
	
	In this paper, we prove the following central limit theorems on $V_s(K_n^\beta)$ for $s=1,\ldots,d$ and $\beta>-1$.
	\begin{tetel}\label{beta-CHT}
		Let $G$ be a standard Gaussian random variable. Then, for $s=1,\ldots,d$, depending on the value of $\beta$, the following hold.
		\begin{enumerate}[(i)]
			\item  For $-1<\beta\leq1$,
			\begin{equation*}
				d_W\Biggl(\frac{V_s(K_n^\beta)-\Ex V_s(K_n^\beta)}{\sqrt{\var V_s(K_n^\beta)}},G\Biggr)\ll n^{-\frac12+\frac{1+\beta}{d+1+2\beta}}(\log n)^{\frac{3(d+1)+2+2\beta}{d+1+2\beta}},            
			\end{equation*}
			\item and for $\beta>1$,
			\begin{equation*}
				d_W\Biggl(\frac{V_s(K_n^\beta)-\Ex V_s(K_n^\beta)}{\sqrt{\var V_s(K_n^\beta)}},G\Biggr)\ll n^{-\frac12+\frac{1+\beta}{d+1+2\beta}}(\log n)^{\frac{3(d+1)+1+3\beta}{d+1+2\beta}}.            
			\end{equation*}
		\end{enumerate}
	\end{tetel}
	
	Notice that the exponent of $n$ in both parts of Theorem~\ref{beta-CHT} is negative for any $\beta>-1$ and $d\geq2$, so the right-hand sides tend to zero as $n\to\infty$, and therefore, 
	\begin{equation*}
		\dfrac{V_s(K_n^\beta)-\Ex V_s(K_n^\beta)}{\sqrt{\var V_s (K_n^\beta)}} \xrightarrow{d} G,
	\end{equation*}
	where $\xrightarrow{d}$ means convergence in distribution.
	However, the rate of convergence depends on $\beta$, and the function in the exponent of the logarithm changes at $\beta=1$.
	
	We note that in the special case of $\beta=0$, Theorem~\ref{beta-CHT} gives back the results of \cite{TTW18} in the uniform model.
	The limit of the right-hand side of part $(i)$ of Theorem~\ref{beta-CHT}, as $\beta\to-1$, gives back the result of Th\"ale \cite{Th18}, where the random points are uniformly distributed on the boundary of a convex body with a smooth boundary.
	
	We also prove matching asymptotic lower and upper bounds for the number of $k$-dimensional faces $f_k$, $k\in\{0,1,\ldots,d-1\}$, of the random beta polytope.
	For the expectation of the $f$-vector, exact formulas for finite $n$, as well as asymptotics as $n\to\infty$, have been proved by Kabluchko, Th\"ale and Zaporozhets \cite{KTZ20}.
	\begin{tetel}\label{thm:f-vector}
		For $k=0,1,\ldots,d-1$,
		$$\Var f_k(K_n^\beta)\approx n^{\frac{d-1}{d+1+2\beta}}.$$
	\end{tetel}

	The order of magnitude in Theorem~\ref{thm:f-vector} coincides with the uniform model for $\beta=0$, and for $d\geq4$, the limit as $\beta\to-1$ matches with \cite{RS25}.

	\subsection*{Non-Euclidean random polytopes}
	Random polytopes in non-Euclidean geometries have recently attracted much attention. It is a natural problem to try to transfer statements from Euclidean theory to more general settings. For an overview of such results, see, for example, Besau, Ludwig, and Werner \cite{BLW18}, Besau and Th\"ale \cite{BT20}, Kabluchko and Panzo \cite{KP25}, Schneider \cite{Sch22}, and the references therein.
	
	
	For $d\ge 2$, let $\M^d$ denote one of the following spaces: Euclidean $d$-space $\R^d$, the unit sphere $\Sp^d$ in $\R^{d+1}$, or the hyperbolic space $\Hy^d=\{x\in\R^{d+1}:x_1^2+\ldots x_d^2-x_{d+1}^2=-1,\; x_{d+1}>0\}$.
	Endow each space with the corresponding geodesic distance: the metric $d$ induced by the Euclidean scalar product $\langle\cdot,\cdot\rangle$ in $\R^d$, $d_S(x,y)=\arccos\langle x,y\rangle$ for $x,y\in\Sp^d$, and $\cosh d_H(x,y)=x_{d+1}y_{d+1}-x_1y_1-\ldots-x_dy_d$ for $x,y\in\Hy^d$.
	
	We call a set $K$ convex in $\M^d$ if, together with any two of its points, the unique geodesic segment connecting them is also contained in $K$. 
	A convex body is a compact convex set in $\M^d$ with non-empty interior. 
	We note that a spherical convex body is always contained in an open hemisphere.
	
	The volume (Lebesgue measure) of a measurable set in $\M^d$ is denoted by $\Vol_{\M^d}(\cdot)$, or specifically $\Vol_d(\cdot)$ in $\R^d$.
	Let $\mathcal{K}(\M^d)$ denote the family of convex bodies in $\M^d$. 
	The convex hull of a closed set $X\subset\M^d$ is the intersection of all closed half-spaces containing $X$ if $\M^d=\R^d$ or $\Hy^d$, and the intersection of all closed hemispheres containing $X$ if $\M^d=\Sp^d$.
	
	In this paper, we consider the following probabilistic model for random polytopes.
	Let $K\in\mathcal{K}(\M^d)$, and let $x_1,\ldots,x_n$ be $n$ independent random points chosen according to the uniform distribution in $K$.
	The convex hull of the points $x_1,\ldots,x_n$ is a random polytope $K_n$  in $K$.
	
	Let $H_{d-1}^{\M^d}(x)$ denote the general Gauss--Kronecker curvature at a boundary point $x\in\bd K$. The boundary of a convex body may not be twice differentiable, so its classical Gauss--Kronecker curvature may not exist at every point. A generalized notion of second-order differentiability can be introduced such that the boundary is differentiable in this sense at almost every point with respect to the surface area measure (cf. Alexandrov's theorem). Then, a generalized Gauss--Kronecker curvature can be defined at these points that coincides with the classical notion everywhere where the boundary is twice differentiable. The symbol $H_{d-1}^{\M^d}(x)$ refers to this generalized notion of Gauss--Kronecker curvature. For more details, see Sch\"utt and Werner \cite{SW23}*{Section~2}.
	The spherical and hyperbolic cases of the following theorem were proved by Besau, Ludwig, and Werner \cite{BLW18}*{Theorems 2.2, 3.2, and Corollaries 2.3, 3.3}. The Euclidean case is due to Sch\"utt \cite{S94} extending results of B\'ar\'any \cite{B92}.
	
	\begin{tetel}
		Let $K\in\K(\M^d)$. If $K_n$ is the convex hull of $n$ random points chosen uniformly in $K$, then
		\begin{align*}
			&\lim_{n\to\infty} \Ex(\Vol_{\M^d}(K\setminus K_n))\cdot n^{\frac{2}{d+1}} = \gamma_d \Vol_{\M^d}(K)^{\frac{2}{d+1}} \int_{\bd K} H_{d-1}^{\M^d}(K,x)^{\frac1{d+1}}\dx x,\\
			&\lim_{n\to\infty} \Ex(f_0(K_n))\cdot n^{-\frac{d-1}{d+1}} = \gamma_d \Vol_{\M^d}(K)^{-\frac{d-1}{d+1}} \int_{\bd K} H_{d-1}^{\M^d}(K,x)^{\frac1{d+1}}\dx x. 
		\end{align*}
		where integration on $\partial K$ is with respect to the $(d-1)$-dimensional Hausdorff measure, and  $\gamma_d$ is an explicitly known constant that depends only on $d$.
	\end{tetel}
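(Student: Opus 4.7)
Since the Euclidean case is Schütt's theorem and the spherical/hyperbolic cases are due to Besau--Ludwig--Werner, I would follow their combined strategy: reduce everything to a cap-covering argument, and derive the vertex-number statement from the volume statement. For the latter reduction, Efron's identity applies to i.i.d.\ sampling from any absolutely continuous distribution and gives
$$\Ex f_0(K_{n+1}) = (n+1)\cdot \PP(x_{n+1}\notin K_n) = (n+1)\cdot \frac{\Ex \Vol_{\M^d}(K\setminus K_n)}{\Vol_{\M^d}(K)}.$$
Dividing by $n^{(d-1)/(d+1)}$, using $(n+1)/n\to 1$, and inserting the volume asymptotic yields the vertex-number asymptotic with the same constant $\beta_d$. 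Hence only the volume asymptotic requires genuine work.

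For the volume asymptotic, my plan is to reduce all three cases to a weighted Euclidean random polytope problem. On $\Sp^d$ and $\Hy^d$ the gnomonic projection $\pi$ maps an open hemisphere onto $\R^d$ (respectively $\Hy^d$ onto the open unit ball), sending geodesics to straight lines. Consequently $\pi(K)$ is a Euclidean convex body, $\pi(K_n) = \conv \pi(\{x_1,\ldots,x_n\})$ as a Euclidean polytope, and the uniform distribution on $K$ pulls back to a smooth, strictly positive density on $\pi(K)$ whose Jacobian is explicit. Starting from the basic identity
$$\Ex \Vol_{\M^d}(K\setminus K_n) = \int_K \PP(x\notin K_n)\, \d\Vol_{\M^d}(x),$$
I would localize the integrand via the Bárány--Larman economic cap covering theorem: the essential contribution comes from a $O(\log n/n)$-neighborhood of $\bd K$, where $\PP(x\notin K_n)$ is controlled by caps of weighted volume of order $1/n$. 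At $\mathcal{H}^{d-1}$-a.e.\ $x\in \bd K$, Alexandrov's theorem supplies a second-order expansion of $\bd\pi(K)$, and the standard Wendel-type cap-volume computation extracts a local factor proportional to the $1/(d+1)$-th power of the Euclidean Gauss--Kronecker curvature of $\bd \pi(K)$. Integrating over $\bd K$ and passing to the limit produces the asserted formula, with $\beta_d$ emerging from explicit Wendel-type integrals over a model cap.

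The principal obstacle is not the cap-covering machinery itself, but the identification of the resulting weighted Euclidean curvature integral with the intrinsic quantity $\int_{\bd K} H_{d-1}^{\M^d}(K,x)^{1/(d+1)} \, \d x$. One must verify that the Jacobians of $\pi$ on $K$ and on $\bd K$, together with the transformation of the outward normal and of the second fundamental form, combine precisely so that the Euclidean curvature of $\bd \pi(K)$ is converted into the generalized Gauss--Kronecker curvature of $\bd K$ measured in the ambient metric of $\M^d$, and that the Alexandrov a.e.\ reduction is compatible with the intrinsic notion of second-order differentiability in $\M^d$. This differential-geometric bookkeeping is where almost all of the technical work sits. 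An alternative route, closer to Schütt's original argument, would bypass gnomonic projection entirely and redo the Macbeath-region, cap-covering, and Wendel estimates directly in spherical and hyperbolic geometry, using intrinsic caps and the curved second fundamental form.
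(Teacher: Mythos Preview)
The paper does not prove this theorem at all: it is quoted from the literature, with the Euclidean case attributed to Sch\"utt \cite{S94} and the spherical and hyperbolic cases to Besau, Ludwig and Werner \cite{BLW18}. Your plan---reducing $f_0$ to the volume via Efron's identity, and reducing the non-Euclidean volume asymptotic to a weighted Euclidean one by gnomonic projection, then invoking the B\'ar\'any--Sch\"utt cap machinery and the curvature bookkeeping---is an accurate summary of how those references proceed, so there is nothing to correct.
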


	We say that a ball of radius $r>0$ rolls freely in $K$ ($K$ slides freely in a ball of radius $R>0$, resp.) if for any $x\in\bd K$ there exists a ball of radius $r$ ($R$, resp.) containing $x$ on its boundary and contained in $K$ (containing $K$, resp.).
	If $K$ has a rolling ball and slides freely in a ball at the same time, then $\bd K$ is $C^1$ and strictly convex. 
	However, $\bd K$ is not necessarily $C^2$. A simple example can be constructed in the plane by joining two circular arcs of different radii that share a tangent at their common endpoints, and the other endpoints are connected by a suitable $C^2_+$ arc. 
	One of our main results is the following theorem.
	
	\begin{tetel}\label{upper-bound2}
		Let $K\in \mathcal{K}(\M^d)$, and assume that $K$ has a rolling ball and slides freely in a ball. 
		Then
		\begin{align*}
			\Var \Vol_{\M^d} (K_n) &\ll n^{-\frac{d+3}{d+1}},\\
			\Var f_0 (K_n) &\ll  n^{\frac{d-1}{d+1}}.
		\end{align*}
	\end{tetel}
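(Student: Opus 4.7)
My plan is to transfer the problem to the Euclidean setting via a central projection and then apply the jackknife/Efron--Stein method together with the B\'ar\'any--Larman economic cap covering theorem, in the spirit of Reitzner's Euclidean variance bounds. In the spherical case I fix a point $p\in\inti K$ and use the gnomonic projection $\Phi\colon U\to\R^d$ from the open hemisphere $U\supset K$ onto the tangent hyperplane at $p$; in the hyperbolic case I use the Beltrami--Klein model $\Phi\colon\Hy^d\to\R^d$ (central projection of the hyperboloid onto the open unit ball). Both are smooth diffeomorphisms that send geodesics to Euclidean lines, so $K'=\Phi(K)$ is a Euclidean convex body, $K_n'=\Phi(K_n)=\conv\{\Phi(x_1),\ldots,\Phi(x_n)\}$ is a Euclidean random polytope, and the uniform distribution on $K$ pushes forward to a distribution on $K'$ with $C^\infty$ density $\phi$, which is bounded above and below by positive constants because $K$ is compact and sits in the domain of $\Phi$. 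In particular $f_0(K_n)=f_0(K_n')$ and $\Vol_{\M^d}(K_n)=\int_{K_n'}\phi(y)\,\dx y$.

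Next I verify that the hypotheses of the theorem transfer to $K'$. Since the derivatives of $\Phi$ of every order are bounded on the compact set $K$, the existence of a rolling ball and of free sliding in a ball for $K$ in $\M^d$ imply the analogous properties for $K'$ in $\R^d$, possibly with different radii. Consequently $\bd K'$ is $C^{1,1}$ with principal curvatures bounded between two positive constants almost everywhere, which is the standard regularity required to run the economic cap covering theorem and the Macbeath-region machinery for $K'$.

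The variance is then controlled by Efron--Stein in the jackknife form
\[
\Var Y_n \leq (n+1)\,\Ex\bigl[(Y_{n+1}-Y_n)^2\bigr].
\]
For $Y_n=\int_{K_n'}\phi\,\dx y$ the increment is nonnegative and bounded pointwise by $\|\phi\|_\infty\cdot V(K_{n+1}'\setminus K_n')$, which in turn is at most $\|\phi\|_\infty$ times the volume of the smallest cap of $K'$ that is disjoint from $K_n'$ and contains $\Phi(x_{n+1})$. For $Y_n=f_0(K_n')$ the increment $|f_0(K_{n+1}')-f_0(K_n')|$ is bounded by $1$ plus the number of vertices of $K_n'$ lying in that same cap. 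Squaring these bounds and integrating over the position of $\Phi(x_{n+1})$ reduces the task to estimating expected squared cap volumes and expected squared vertex counts inside caps of $K'$ cut off at the critical scale $\sim n^{-1/(d+1)}$.

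The final step is to execute these cap estimates, following Reitzner's strategy for the uniform Euclidean case: decompose $\bd K'$ into Macbeath regions at the critical scale, apply the economic cap covering theorem, and exploit that for a cap $C$ of $K'$ of weighted mass $m$ the probability that no sample point falls in $C$ is $(1-m)^n\le e^{-nm}$, so the relevant caps have mass of order $1/n$. The main obstacle is that the classical estimates of B\'ar\'any--Larman, B\'ar\'any--Reitzner, and Reitzner are usually phrased for the uniform measure and must be reworked for the weighted density $\phi$; because $\phi$ is bounded above and below by positive constants this adaptation costs only multiplicative constants, and combining the resulting cap-moment bounds with the expectation asymptotics of Theorem~\ref{thm:exp} yields
\[
\Var\Vol_{\M^d}(K_n)\ll n^{-\frac{d+3}{d+1}} \quad\text{and}\quad \Var f_0(K_n)\ll n^{\frac{d-1}{d+1}},
\]
as claimed.
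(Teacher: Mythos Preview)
Your overall plan---push everything through the gnomonic/Beltrami--Klein projection to a weighted Euclidean model and then run an Efron--Stein/economic-cap-covering argument---is exactly the route the paper takes in its first (indirect) proof: there the authors verify that rolling and sliding balls survive the projection and then invoke the weighted Euclidean bound of Bak\'o-Szab\'o--Fodor (Theorem~\ref{thm:R-weighted-upper}) as a black box. The paper also gives a second, direct proof working intrinsically on $\Sp^d$ with a spherical economic cap covering; your sketch is essentially the hybrid of these two, reproving the weighted Euclidean result rather than citing it.

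There is, however, a genuine gap in your Efron--Stein step. You bound $V(K_{n+1}'\setminus K_n')$ by the volume of the \emph{smallest} cap $C$ of $K'$ that misses $K_n'$ and contains $\Phi(x_{n+1})$; this inequality is false. Take $K'=[-1,1]^2$, $K_n'=\conv\{(1,1),(1,-1),(0,0)\}$, $x_{n+1}=(-1,0)$: then $K_{n+1}'\setminus K_n'$ has area $1$, while caps $\{x\le -1+\varepsilon\}$ avoid $K_n'$, contain $x_{n+1}$, and have area $2\varepsilon\to 0$. The analogous $f_0$ bound fails for the same reason---your cap is disjoint from $K_n'$, so it contains \emph{no} vertices of $K_n'$ and would give $|f_0(K_{n+1}')-f_0(K_n')|\le 1$, which is not true. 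The correct control, used both in Reitzner's original argument and in the paper's direct proof, writes $K_{n+1}'\setminus K_n'$ as the union of simplices $\conv(F,x_{n+1})$ over facets $F$ visible from $x_{n+1}$, bounds each by the cap $C(F)$ that $F$ cuts off, and then handles the resulting double sum over pairs of facets via the cap covering. Your single-cap shortcut does not work.

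A smaller point: the claim that rolling and sliding balls transfer ``because the derivatives of $\Phi$ of every order are bounded'' needs more than one sentence. In the spherical case the paper has to pass through spherical and Euclidean polarity (using $g(K)^\circ=\tilde g(K^\ast)$ and Hug's result that a rolling ball for $L$ gives a sliding ball for $L^\circ$) to get the sliding-ball condition for $g(K)$; a purely local derivative bound is not obviously enough to globalize the outer ball. This is fixable, but it is not as immediate as your sketch suggests.
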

	The variance upper bound on the volume implies a strong law of large numbers that can be proved by standard arguments (see, for instance, \cite{BFV10}, \cite{R03}). 
	\begin{corollary}
		Under the same assumptions as in Theorem~\ref{upper-bound2}, it holds with probability one that
		\begin{equation*}
			\lim_{n\to\infty} (\Vol_{\M^d}(K\setminus K_n))\cdot n^{\frac{2}{d+1}} = \gamma_d \Vol_{\M^d}(K)^{\frac{2}{d+1}} \int_{\bd K} H_{d-1}^{\M^d}(K,x)^{\frac1{d+1}}\dx x.
		\end{equation*}
	\end{corollary}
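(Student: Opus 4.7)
The plan is to combine the expectation asymptotic in Theorem~\ref{thm:exp} with the variance bound in Theorem~\ref{upper-bound2} via a subsequence Borel--Cantelli argument, and then use the monotonicity of $V_n:=\Vol_{\M^d}(K\setminus K_n)$ in $n$ to upgrade almost sure convergence along a subsequence to almost sure convergence along the full sequence. This is the standard pattern referenced in~\cite{BFV10,R03}.

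Set $W_n:=n^{2/(d+1)}V_n$ and let $c$ denote the constant on the right-hand side of the claimed limit. Theorem~\ref{thm:exp} gives $\Ex W_n\to c$, while Theorem~\ref{upper-bound2} yields
\begin{equation*}
\Var W_n \;=\; n^{4/(d+1)}\Var V_n \;\ll\; n^{-(d-1)/(d+1)}.
\end{equation*}
I would then choose a subsequence $n_k:=\lfloor k^{\alpha}\rfloor$ with $\alpha>(d+1)/(d-1)$, so that $\sum_k \Var W_{n_k}\ll \sum_k k^{-\alpha(d-1)/(d+1)}<\infty$. For fixed $\varepsilon>0$, Chebyshev's inequality gives $\PP(|W_{n_k}-\Ex W_{n_k}|>\varepsilon)\le \varepsilon^{-2}\Var W_{n_k}$, which is summable in $k$. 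The Borel--Cantelli lemma together with $\Ex W_{n_k}\to c$ then forces $W_{n_k}\to c$ almost surely.

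For the upgrade to the full sequence, note that $K_n\subseteq K_{n+1}$ by construction, so $V_n$ is non-increasing in $n$. Therefore, for every $k$ and every $n$ with $n_k\le n\le n_{k+1}$,
\begin{equation*}
\left(\frac{n_k}{n_{k+1}}\right)^{\!2/(d+1)} W_{n_{k+1}} \;\le\; W_n \;\le\; \left(\frac{n_{k+1}}{n_k}\right)^{\!2/(d+1)} W_{n_k}.
\end{equation*}
The polynomial choice of $(n_k)$ guarantees $n_{k+1}/n_k\to 1$, so both envelopes converge almost surely to $c$, giving the desired $W_n\to c$ almost surely.

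There is no real obstacle; the only balance to strike is picking $\alpha$ large enough that the variance sum along $(n_k)$ converges, yet small enough that $n_{k+1}/n_k\to 1$. This is possible precisely because the variance bound is a strictly positive power of $n^{-1}$ for every $d\ge 2$, where $(d-1)/(d+1)>0$.
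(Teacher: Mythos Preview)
Your proposal is correct and is precisely the standard argument the paper invokes: the paper does not give an explicit proof of Corollary~\ref{SLLN2} but simply states that the variance bound implies the strong law by the method of \cite{BFV10,R03}, which is exactly the Chebyshev/Borel--Cantelli subsequence argument combined with the monotonicity sandwich that you have written out. The only implicit assumption worth making explicit is that the $K_n$ are coupled via a single i.i.d.\ sequence $x_1,x_2,\ldots$ so that $K_n\subseteq K_{n+1}$ holds pathwise; this is the natural setup for the almost sure statement and is what \cite{BFV10,R03} use as well.
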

	
	The number of vertices is not a monotone function of $n$, but using the fact that it may be increased by at most one when another point is added, for $d\ge4$, similarly to \cite{R03}, a strong law of large numbers can be proved for $f_0$ as well.
	In $\R^2$, B\'ar\'any and Steiger \cite{BS13} proved variance upper bounds and strong laws of large numbers for the area and the number of vertices of a random polygon with no smoothness condition on $K$. 
	However, the method they used for the upper bounds can not be extended to higher dimensions.
	\begin{corollary}
		Under the same assumptions as in Theorem~\ref{upper-bound2}, for $d\ge4$, it holds with probability one that
		\begin{equation*}
			\lim_{n\to\infty} f_0(K_n)\cdot n^{-\frac{d-1}{d+1}} = \gamma_d \Vol_{\M^d}(K)^{-\frac{d-1}{d+1}} \int_{\bd K} H_{d-1}^{\M^d}(K,x)^{\frac1{d+1}}\dx x.
		\end{equation*}
	\end{corollary}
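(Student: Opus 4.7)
The strategy is to combine the variance bound of Theorem \ref{upper-bound2} with the expectation asymptotics of Theorem \ref{thm:exp} via a Borel--Cantelli argument along a polynomial subsequence, and then interpolate to the full sequence using the one-sided bound $f_0(K_{n+1}) \le f_0(K_n) + 1$ (a newly added point can create at most one new vertex of the hull).

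Set $\alpha = (d-1)/(d+1)$ and let $c$ denote the constant on the right-hand side of the expectation limit for $f_0$ in Theorem \ref{thm:exp}. I would take the subsequence $n_k := \lceil k^2 \rceil$. By Chebyshev's inequality and Theorem \ref{upper-bound2},
\[
\PP\bigl(|f_0(K_{n_k}) - \Ex f_0(K_{n_k})| > \varepsilon\, n_k^\alpha\bigr) \le \frac{\Var f_0(K_{n_k})}{\varepsilon^2 n_k^{2\alpha}} \ll \frac{1}{\varepsilon^2\, k^{2\alpha}}
\]
for every fixed $\varepsilon > 0$. The assumption $d \ge 4$ gives $\alpha \ge 3/5$, so $2\alpha > 1$ and these probabilities are summable in $k$. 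A standard Borel--Cantelli argument, applied with $\varepsilon = 1/j$ and then letting $j \to \infty$, together with the expectation limit of Theorem \ref{thm:exp}, yields $f_0(K_{n_k})/n_k^\alpha \to c$ almost surely.

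To pass from the subsequence to the full sequence, iterate $f_0(K_{n+1}) \le f_0(K_n) + 1$: for every $n_k \le n \le n_{k+1}$,
\[
f_0(K_{n_{k+1}}) - (n_{k+1} - n_k) \le f_0(K_n) \le f_0(K_{n_k}) + (n_{k+1} - n_k).
\]
With $n_k = k^2$ one has $n_{k+1} - n_k = 2k+1$, while $n_k^\alpha \asymp k^{2\alpha}$, hence the gap term satisfies
\[
\frac{n_{k+1} - n_k}{n^\alpha} \ll k^{\,1-2\alpha} = k^{(3-d)/(d+1)} \longrightarrow 0,
\]
where the hypothesis $d \ge 4$ is used crucially. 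Since also $n_k/n,\, n_{k+1}/n \to 1$, dividing the sandwich by $n^\alpha$ and letting $n \to \infty$ gives $f_0(K_n)/n^\alpha \to c$ almost surely.

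The main obstacle, and the reason behind the restriction $d \ge 4$, is the need to balance two competing constraints on the subsequence growth exponent $\beta$ (here $\beta = 2$): the summability of Chebyshev tails requires $\beta > (d+1)/(d-1)$, while the deterministic Lipschitz-type control of the subsequence gaps requires $\beta < (d+1)/2$. These ranges overlap precisely when $d \ge 4$; the planar case is covered by the direct methods of B\'ar\'any and Steiger \cite{BS13}, while $d = 3$ appears to fall outside the scope of this approach.
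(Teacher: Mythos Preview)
Your proof is correct and is precisely the argument the paper has in mind: the paper does not write out a proof of this corollary but only remarks that, using $f_0(K_{n+1})\le f_0(K_n)+1$ and following Reitzner \cite{R03}, the strong law for $f_0$ holds for $d\ge4$. Your Chebyshev/Borel--Cantelli step along $n_k=k^2$ and the sandwich interpolation with the $+1$-Lipschitz bound are exactly the details behind that remark, and your analysis of the competing constraints $\beta>(d+1)/(d-1)$ and $\beta<(d+1)/2$ correctly explains why $d\ge4$ is needed.
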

	
	
	
	Since Besau and Th{\"a}le \cite{BT20} proved in $\Sp^d$ and $\Hy^d$ that if $K$ 
	has $C_+^2$ boundary, then $\Var \Vol_{\M^d} (K_n) \gg n^{-\frac{d+3}{d+1}}$, Theorem~\ref{upper-bound2} yields the following.
	\begin{tetel}\label{main:upper-bound}
		Let $K\in\mathcal{K}(\M^d)$ with $C_+^2$ boundary. 
		Then
		\begin{equation*}
			\Var \Vol_{\M^d} (K_n) \approx n^{-\frac{d+3}{d+1}}.
		\end{equation*}
	\end{tetel}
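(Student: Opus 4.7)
The plan is to derive the theorem immediately from Theorem~\ref{upper-bound2} combined with previously known matching lower bounds, after reducing the $C_+^2$ hypothesis to the rolling-ball and sliding conditions required by Theorem~\ref{upper-bound2}.

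First I would verify that a $C_+^2$ boundary implies the existence of a ball of some radius $r>0$ that rolls freely in $K$ and of a ball of some radius $R<\infty$ in which $K$ slides freely. Since $\bd K$ is compact and $C^2$ with strictly positive principal curvatures, these curvatures satisfy uniform two-sided bounds $0<\kappa_{\min}\le\kappa_{\max}<\infty$. In Euclidean space this gives $r=1/\kappa_{\max}$ and $R=1/\kappa_{\min}$ by the classical Blaschke rolling characterization. In $\Sp^d$ and $\Hy^d$ the same principal-curvature bounds yield the analogous statement for geodesic balls, either by a direct comparison of the second fundamental form of $\bd K$ with that of a geodesic hypersphere of matching curvature, or by passing to a gnomonic/projective model, in which geodesics become affine lines and uniform two-sided curvature bounds are preserved.

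With the hypotheses of Theorem~\ref{upper-bound2} thereby verified, the upper bound $\Var\Vol_{\M^d}(K_n)\ll n^{-(d+3)/(d+1)}$ follows at once. The matching lower bound $\Var\Vol_{\M^d}(K_n)\gg n^{-(d+3)/(d+1)}$ is due to Besau and Th\"ale \cite{BT20} in the spherical and hyperbolic settings, and is classical in the Euclidean case (going back to Reitzner). Combining the two bounds yields the claimed asymptotic equivalence.

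The only non-routine ingredient is the geometric implication $C_+^2 \Rightarrow $ rolling $+$ sliding in the non-Euclidean settings; this is a standard differential-geometric fact independent of the probabilistic content of the paper, so I do not anticipate any substantive obstacle, and everything else is essentially bookkeeping on top of results already proved or cited in the excerpt.
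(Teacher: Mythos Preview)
Your proposal is correct and follows exactly the paper's approach: the theorem is stated as an immediate corollary of Theorem~\ref{upper-bound2} together with the lower bound of Besau and Th\"ale \cite{BT20} (and Reitzner in the Euclidean case), with the $C_+^2$ hypothesis tacitly supplying the rolling and sliding ball conditions. The only difference is that you spell out the implication $C_+^2 \Rightarrow$ rolling $+$ sliding, which the paper leaves implicit as a standard fact.
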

	For $\M^d=\R^d$, both the lower and the upper bounds are due to Reitzner \cites{R03,R05}.
	
	Our proof of Theorem~\ref{upper-bound2} uses results of a weighted model described in Subsection~\ref{weighted}, and thus it is indirect. 

	\subsection*{Circumscribed model}
	The probability model discussed in this section considers circumscribed spherical polytopes containing a convex body $K$. 
	This model is naturally connected to the inscribed ones discussed via spherical polarity.
	The role of random points inside of $K$ is replaced by random closed hemispheres containing $K$, and the intersection of $n$ such hemispheres is a random polytope containing $K$.
	This model was studied, for example, by Besau, Ludwig and Werner \cite{BLW18}.

	We note that the hyperbolic model is not considered here, as 
	we are not aware of the expectations of the corresponding random variables.
	
	Let $\mathcal{H}$ denote the space of closed hemispheres in $\Sp^d$.
	Each point $x\in\Sp^d$ is the pole of a unique closed hemisphere $H^-(x)=\{y\in\Sp^d:\langle x,y\rangle\leq0\}$.
	Thus, $\mathcal{H}$ inherits a natural topology from $\Sp^d$.
	We define the measure of a Borel set $A\subset\mathcal{H}$ as
	$$\mu(A)=\frac{1}{\omega_{d+1}}\int_{\Sp^d}\indi(H^-(x)\in A)\, \dx x,$$
	where $\omega_{d+1}$ is the surface volume of $\Sp^d$ and integration is with respect to spherical Lebesgue measure, and $\indi(\cdot)$ denotes the indicator function of an event.
	
	Let $\mathcal{H}_K=\{H^-\in\mathcal{H}:K\subset H^-\}$. 
	Choose $n$ i.i.d. random hemispheres containing $K$ according to the uniform distribution with the probability measure $\mu_K=\mu/\mu(\mathcal{H}_K)$.
	The intersection of these hemispheres is a random polytope containing $K$, denoted by $K^{(n)}$.
	
	The spherical polar $K^\ast$ of a convex body $K$ is defined as
	$K^\ast=\bigcap_{x\in K}H^-(x).$
	Since polarity reverses set inclusion, a hemisphere $H^-(x)$ contains $K$ if and only if $x\in K^\ast$.
	The polar body $(K^{(n)})^\ast$ is a polytope contained in $K^\ast$, and it is the convex hull of those $n$ i.i.d. random points in $K^\ast$ that are the poles of the random hemispheres containing $K$.
	This provides a direct connection between the inscribed and circumscribed models.
	
	Based on this connection, Besau, Ludwig, and Werner \cite{BLW18} proved asymptotic formulas for the expectation of the spherical mean width and the number of facets $f_{d-1}$ of $K^{(n)}$.
	The spherical mean width $U_1(K)$ is defined as
	$$U_1(K)=\frac{1}{2}\int_{G(d+1,d)}\chi(K\cap H)\,\d \nu (H),$$
	where $G(d+1,d)$ denotes the Grassmannian of the $d$-dimensional linear subspaces of $\R^{d+1}$, $\nu$ is the unique rotation invariant probability measure on $G(d+1,d)$, and $\chi$ denotes the Euler characteristic. 
	
	\begin{tetel}[\cite{BLW18}, Corollary 2.6]
		Let $K\in\K(\Sp^d)$. If $K^{(n)}$ is the intersection of $n$ random hemispheres containing $K$ and chosen uniformly according to $\mu_K$, then
		\begin{align*}
			&\lim_{n\to\infty}\Ex_{\mu_K}(U_1(K^{(n)})-U_1(K))\cdot n^{\frac{2}{n+1}}=\frac{\gamma_d}{\omega_{d+1}}\Vol_{\Sp^d}(K^\ast)^{\frac{2}{d+1}}\int_{\bd K} H_{d-1}^{\Sp^d}(K,x)^{\frac{d}{d+1}}\dx x,\\
			&\lim_{n\to\infty}\Ex_{\mu_K}f_{d-1}(K^{(n)})\cdot n^{-\frac{d-1}{n+1}}=\gamma_d\Vol_{\Sp^d}(K^\ast)^{-\frac{d-1}{d+1}}\int_{\bd K} H_{d-1}^{\Sp^d}(K,x)^{\frac{d}{d+1}}\dx x.
		\end{align*}
	\end{tetel}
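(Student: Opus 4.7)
The plan is to reduce the circumscribed model to the inscribed one via spherical polarity and then invoke Theorem~\ref{thm:exp} applied to the polar body $K^\ast$. The excerpt already points out the central fact: the polar body $(K^{(n)})^\ast$ is the convex hull of the poles of the $n$ random hemispheres, and the push-forward of $\mu_K$ under the pole map is precisely the uniform probability on $K^\ast$. Hence $(K^{(n)})^\ast$ is an inscribed random polytope in $K^\ast\in\K(\Sp^d)$ in the sense of the inscribed model.

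First I would translate the two quantities of interest into functionals of $(K^{(n)})^\ast$. Since spherical polarity exchanges vertices and facets,
$$f_{d-1}(K^{(n)})=f_0\bigl((K^{(n)})^\ast\bigr).$$
For the spherical mean width, the definition via the Euler characteristic combined with the observation that a great $(d-1)$-sphere with unit normal $\pm n$ misses $K$ exactly when $n\in\inti K^\ast$ or $-n\in\inti K^\ast$ gives the identity $U_1(K)=\tfrac12-\Vol_{\Sp^d}(K^\ast)/\omega_{d+1}$, hence
$$U_1(K^{(n)})-U_1(K)=\frac{1}{\omega_{d+1}}\bigl(\Vol_{\Sp^d}(K^\ast)-\Vol_{\Sp^d}((K^{(n)})^\ast)\bigr)=\frac{1}{\omega_{d+1}}\Vol_{\Sp^d}\bigl(K^\ast\setminus (K^{(n)})^\ast\bigr).$$
Applying Theorem~\ref{thm:exp} to $K^\ast$ yields the asymptotics of the right-hand sides in terms of $\Vol_{\Sp^d}(K^\ast)^{\pm2/(d+1)}$ and $\int_{\bd K^\ast}H_{d-1}^{\Sp^d}(K^\ast,y)^{1/(d+1)}\dx y$, with the prefactor $\beta_d/\omega_{d+1}$ appearing naturally in the mean-width case.

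The remaining step is to rewrite the curvature integral over $\bd K^\ast$ as an integral over $\bd K$. Let $\phi\colon\bd K\to\bd K^\ast$ denote the spherical polarity map that sends a boundary point $x\in\bd K$ to the pole of a supporting hemisphere at $x$; by the spherical analogue of Alexandrov's theorem, $\phi$ is defined $\mathcal H^{d-1}$-a.e.\ and is bijective onto a full-measure subset of $\bd K^\ast$. The key non-Euclidean polarity identities are the pointwise duality of generalized curvatures
$$H_{d-1}^{\Sp^d}(K^\ast,\phi(x))=H_{d-1}^{\Sp^d}(K,x)^{-1}$$
and the corresponding Jacobian formula $\dx\mathcal H^{d-1}\!\!\restriction_{\bd K^\ast}\!(\phi(x))=H_{d-1}^{\Sp^d}(K,x)\,\dx\mathcal H^{d-1}\!\!\restriction_{\bd K}\!(x)$, exactly as in the Euclidean case treated by Sch\"utt--Werner. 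Combining them,
$$\int_{\bd K^\ast} H_{d-1}^{\Sp^d}(K^\ast,y)^{\frac1{d+1}}\dx y=\int_{\bd K} H_{d-1}^{\Sp^d}(K,x)^{-\frac1{d+1}}\cdot H_{d-1}^{\Sp^d}(K,x)\dx x=\int_{\bd K} H_{d-1}^{\Sp^d}(K,x)^{\frac{d}{d+1}}\dx x,$$
which matches the right-hand sides in the statement.

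The main obstacle is the careful verification of the spherical polarity calculus, namely the curvature duality and the Jacobian of $\phi$, for general convex bodies where second-order differentiability holds only in the Alexandrov sense. Once this is set up, everything else is a bookkeeping exercise: $f_{d-1}(K^{(n)})$ is handled by the second line of Theorem~\ref{thm:exp}, $U_1(K^{(n)})-U_1(K)$ by the first line applied to $K^\ast$, and the integral over $\bd K^\ast$ is converted to one over $\bd K$ by the change of variables above.
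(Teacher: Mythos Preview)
The paper does not prove this theorem at all: it is quoted verbatim as \cite{BLW18}*{Corollary~2.6} and used only as background for Corollary~\ref{circ-upper}. So there is no proof in the paper to compare against; your sketch is in fact a reconstruction of the argument in the original source.

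That said, your approach is correct and is precisely the one indicated by the paragraph preceding the statement. The two reductions
\[
f_{d-1}(K^{(n)})=f_0\bigl((K^{(n)})^\ast\bigr),\qquad
U_1(K^{(n)})-U_1(K)=\frac{1}{\omega_{d+1}}\Vol_{\Sp^d}\bigl(K^\ast\setminus (K^{(n)})^\ast\bigr)
\]
are exactly right (the identity $U_1(L)=\tfrac12-\Vol_{\Sp^d}(L^\ast)/\omega_{d+1}$ follows from the definition since a great $(d-1)$-sphere with normal $n$ avoids $L$ iff $n\in\inti L^\ast$ or $-n\in\inti L^\ast$), and the paper itself records that $(K^{(n)})^\ast$ is distributed as the convex hull of $n$ uniform points in $K^\ast$. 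The change of variables
\[
\int_{\bd K^\ast} H_{d-1}^{\Sp^d}(K^\ast,y)^{\frac1{d+1}}\dx y
=\int_{\bd K} H_{d-1}^{\Sp^d}(K,x)^{\frac{d}{d+1}}\dx x
\]
via the spherical Gauss map $\phi$ and the curvature duality $H_{d-1}^{\Sp^d}(K^\ast,\phi(x))\cdot H_{d-1}^{\Sp^d}(K,x)=1$ is the standard mechanism; you rightly flag that its justification at the level of Alexandrov second-order points (rather than $C^2_+$ boundaries) is the only nontrivial step, and this is handled in \cite{BLW18}.
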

	
	Our variance upper bounds in Theorem~\ref{upper-bound2} yield the following upper bounds on the variances of $U_1(K^{(n)})$ and $f_{d-1}(K^{(n)})$ due to spherical polarity.
	\begin{corollary}
		Let $K\in\mathcal{K}(\Sp^d)$ that has a rolling ball and which slides freely in a ball. 
		Then
		\begin{align*}
			\Var U_1 (K^{(n)}) &\ll n^{-\frac{d+3}{d+1}},\\
			\Var f_{d-1} (K^{(n)}) &\ll  n^{\frac{d-1}{d+1}}.
		\end{align*}
	\end{corollary}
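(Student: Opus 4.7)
The plan is to leverage the distributional identification, already described in the excerpt, between the circumscribed polytope $K^{(n)}$ in $K$ and an inscribed polytope in the polar body $K^\ast$, and then to apply Theorem~\ref{upper-bound2} to $K^\ast$. Recall from the paper that if the hemispheres defining $K^{(n)}$ are drawn i.i.d.\ according to $\mu_K$, then their poles are i.i.d.\ uniform in $K^\ast$, so $(K^{(n)})^\ast$ has the same distribution as $(K^\ast)_n$, the inscribed random polytope in $K^\ast$.

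For the facet count the reduction is immediate: combinatorial polarity gives $f_{d-1}(K^{(n)}) = f_0((K^{(n)})^\ast)$ almost surely, hence
\begin{equation*}
\Var f_{d-1}(K^{(n)}) = \Var f_0((K^\ast)_n).
\end{equation*}
For the mean width one first records the spherical Gauss--Bonnet--type identity
\begin{equation*}
U_1(L) = \tfrac{1}{2} - \omega_{d+1}^{-1}\,\Vol_{\Sp^d}(L^\ast),
\end{equation*}
valid for every spherical convex body $L$ contained in an open hemisphere. This follows from $\chi(L\cap H)=\indi\{L\cap H\neq\emptyset\}$ together with the observation that, since $L$ has full-dimensional linear span, $L^\ast$ and $-L^\ast$ are disjoint on $\Sp^d$, so the $\nu$-measure of hyperplanes missing $L$ equals $2\Vol_{\Sp^d}(L^\ast)/\omega_{d+1}$. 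Applying this identity with $L=K$ and $L=K^{(n)}$ (and using $(K^{(n)})^\ast\subset K^\ast$) yields
\begin{equation*}
U_1(K^{(n)}) - U_1(K) = \omega_{d+1}^{-1}\,\Vol_{\Sp^d}\bigl(K^\ast\setminus (K^{(n)})^\ast\bigr),
\end{equation*}
and therefore $\Var U_1(K^{(n)}) = \omega_{d+1}^{-2}\,\Var \Vol_{\Sp^d}((K^\ast)_n)$.

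It remains to verify that $K^\ast$ satisfies the hypotheses of Theorem~\ref{upper-bound2}. This is a classical feature of spherical polarity: the rolling ball and free sliding properties are exchanged by polar duality, so that $K^\ast$ inherits both of these properties from $K$ with suitably complementary radii. Once this is recorded, Theorem~\ref{upper-bound2} applied to $K^\ast$ delivers $\Var \Vol_{\Sp^d}((K^\ast)_n)\ll n^{-(d+3)/(d+1)}$ and $\Var f_0((K^\ast)_n)\ll n^{(d-1)/(d+1)}$, which combined with the two reductions above give the stated bounds. The main obstacle is the mean-width/volume identity above and the transfer of the rolling/sliding hypotheses through spherical polarity; both are standard but need to be stated carefully in the spherical setting.
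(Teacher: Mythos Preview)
Your proposal is correct and follows exactly the route the paper indicates: the paper states the corollary as a consequence of Theorem~\ref{upper-bound2} ``due to spherical polarity'', and you have spelled out precisely that reduction---the identification $(K^{(n)})^\ast\stackrel{d}{=}(K^\ast)_n$, the combinatorial duality $f_{d-1}(K^{(n)})=f_0((K^{(n)})^\ast)$, the identity $U_1(L)=\tfrac12-\omega_{d+1}^{-1}\Vol_{\Sp^d}(L^\ast)$, and the transfer of the rolling/sliding hypotheses to $K^\ast$ (the latter is in fact carried out in the paper's Section~3). No alternative argument is needed.
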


	\section{Geometric tools}
	
	\subsection{The economic cap covering theorem}
	Let $K\in\mathcal{K}(\mathcal{M}^d)$ and let $H^-$ be a closed half-space  if $\mathcal{M}^d=\R^d$ or $\Hy^d$, and a closed hemisphere if $\mathcal{M}^d=\Sp^d$. Let $H^+$ denote the other closed half-space (hemisphere) determined by $H^-$.
	For $v>0$, we define the convex floating body of $K$ as
	\begin{equation*}
		K[v]=\bigcap \{H^-\colon \Vol_{\mathcal{M}^d}(K\cap H^+)\leq v\}.
	\end{equation*}
	
	Euclidean convex floating bodies were defined by B\'ar\'any and Larman \cite{BL88}, and independently by Sch\"utt and Werner \cite{SW04}. 
	The spherical convex floating body was introduced by Besau and Werner \cite{BW16}, and hyperbolic convex floating bodies were defined by Besau and Werner \cite{BW18}. 
	
	The closure of the complement of $K[v]$ in $K$ is called the wet part of $K$ with parameter $v$, and it is denoted by $K(v)$.
	
	In Sections~\ref{sec:beta-upper}, \ref{sec:CHT}, and \ref{sec:direct}, we are going to use the economic cap covering theorem in $\R^d$, that was proved by B{\'a}r{\'a}ny and Larman \cite{BL88} and  B\'ar\'any \cite{Bar89}. 
	
	\begin{tetel}[\cite{BL88}, \cite{Bar89}]\label{R-ecct}
		Assume that $K\subset \R^d$ is a convex body with $\Vol_d(K)=1$ and $0<v <v_0=(2d)^{-2d}$. Then there exist caps $C_1, \ldots, C_m$ and pairwise disjoint convex sets $C_1',\ldots, C_m'$ such that $C_i'\subset C_i$ for each $i$, and
		\begin{enumerate}[(i)]
			\item $\bigcup_1^m C_i'\subset K(v) \subset \bigcup_1^m
			C_i$,
			\item $\Vol_d(C_i') \gg v$ and $\Vol_d(C_i)\ll v$ for each $i$,
			\item for each cap $C$ with $C\cap
			K[v]=\emptyset$ there is a $C_i$ containing C.
		\end{enumerate}
	\end{tetel}

		\subsection{Weighted non-uniform models in $\R^d$}\label{weighted}
		Let $K\in\mathcal{K}(\R^d)$ and let $\varrho: K\to (0,\infty)$ be a probability density function that is continuous in a neighbourhood of the boundary of $K$ (in $K$).
		Then $\PP_\varrho(A):=\int_A \varrho(x)\,\d x$ for any measurable set $A\subset K$. We denote the expectation and variance with respect to $\PP_\varrho$ by $\Ex_\varrho$ and $\var_\varrho$, respectively.
		Let $x_1,\ldots,x_n$ be i.i.d. random points from $K$ distributed according to $\PP_\varrho$, and let $K_{(n)}$ be their convex hull, a random polytope in $K$.
		
		Furthermore, let $\lambda:K\to(0,\infty)$ be a weight function that is integrable in $K$ and continuous in a neighborhood of $\bd K$ and let $V_\lambda(A)=\int_A \lambda(x)\,\d x$ for all measurable subsets $A\subset K$
		.
		For $\varrho\equiv1/\Vol_d(K)$ and $\lambda\equiv1$, we obtain the uniform model.
		
		In this model, with no smoothness condition on $K$, B{\"o}r{\"o}czky, Fodor and Hug \cite{BFH10} studied the asymptotic behavior of the expectation of the weighted volume of missed part of $K$ and the number of vertices $f_0(K_{(n)})$ of the random polytope $K_{(n)}$, and proved that
		\begin{align*}
			\lim_{n\to\infty} \Ex_{\varrho} (V_\lambda(K_{(n)})) \cdot n^{\frac{2}{d+1}} &= \gamma_d \int_{\bd K} \varrho(x)^{\frac{-2}{d+1}} \lambda(x)H_{d-1}^{\R^d}(K,x)^{\frac1{d+1}}\d x,\\
			\lim_{n\to\infty} \Ex_{\varrho} (f_0(K_{(n)})) \cdot n^{-\frac{d-1}{d+1}}  &= \gamma_d \int_{\bd K} \varrho(x)^{\frac{d-1}{d+1}} H_{d-1}^{\R^d}(K,x)^{\frac1{d+1}}\d x. \notag
		\end{align*}
		
		Assuming that $\bd K$ is $C^2_+$, Besau and Th{\"a}le \cite{BT20} proved the following asymptotic variance lower bound and a central limit theorem: 
		\begin{equation}\label{R-weighted-lower}
			\var_\varrho(V_\lambda(K_{(n)}))\gg n^{-\frac{d+3}{d+1}},\qquad 
			\dfrac{V_\lambda(K_{(n)})-\Ex_\varrho V_\lambda(K_{(n)})}{\sqrt{\Var_\varrho V_\lambda (K_{(n)})}} \xrightarrow{d} G,
		\end{equation}
		as $n\to\infty$, where $G$ is a standard normal random variable.
		The lower bounds in \cite{BT20} for $\Sp^d$ and $\Hy^d$ were deduced from \eqref{R-weighted-lower} by choosing particular weight functions that come from the gnomonic projections.
		
		Under weaker smoothness assumptions, Bak{\'o}-Szab{\'o} and Fodor \cite{BSF24} proved matching asymptotic upper bounds for the variance of the weighted volume and the number of vertices.
		
		
		\begin{tetel}[\cite{BSF24}, Theorem~1.1]\label{thm:R-weighted-upper}
			For a convex body $K\subset\R^d$ that has a rolling ball and which slides freely in a ball, it holds that
			\begin{align*}
				\var_\varrho(V_\lambda(K_{(n)}))&\ll n^{-\frac{d+3}{d+1}},\\
				\var_\varrho(f_0(K_{(n)}))&\ll n^{\frac{d-1}{d+1}},
			\end{align*}
			where the implied constants depend only on $K,\varrho,\lambda$ and the dimension $d$.
		\end{tetel}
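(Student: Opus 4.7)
The plan is to apply the Efron--Stein jackknife inequality in the spirit of Reitzner~\cite{R03}, adapted to the weighted, non-uniform setting, with an economic cap covering theorem as the central geometric input.

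I would first invoke the jackknife bound
\[
\Var_\varrho(T_n) \le n\,\Ex_\varrho\!\bigl[(T_n - T_{n-1})^2\bigr],
\]
where $T_n=h(X_1,\ldots,X_n)$ is either $V_\lambda(K_{(n)})$ or $f_0(K_{(n)})$ and $T_{n-1}=h(X_1,\ldots,X_{n-1})$ is the leave-one-out statistic. For the volume, the difference $T_n-T_{n-1}$ equals the $\lambda$-integral over the dome $K_{(n)}\setminus K_{(n-1)}$; it vanishes unless $X_n \notin \conv\{X_1,\ldots,X_{n-1}\}$, and in the opposite case is dominated by $V_\lambda(C)$ for a suitable cap $C$ of $K$. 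Since $\lambda$ and $\varrho$ are continuous and bounded away from $0$ and $\infty$ on a neighbourhood of $\bd K$, one has $V_\lambda(C)\approx V(C)\approx \PP_\varrho(C)$ for small caps $C$, so the weights only contribute bounded multiplicative constants. The $f_0$ case is treated analogously via Efron's identity, controlling the number of vertices created or destroyed by a single sample point through a bounded function of cap multiplicities.

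The geometric engine is the economic cap covering theorem for bodies satisfying the rolling/sliding ball condition. For $t\in(0,1)$, the $\PP_\varrho$-wet part $K(t):=\bigcup\{C : \PP_\varrho(C)\le t\}$ admits a covering by $O(t^{-(d-1)/(d+1)})$ Macbeath-type cells of individual $\PP_\varrho$-measure $\approx t$ whose union has $\PP_\varrho$-measure $O(t^{2/(d+1)})$. Squaring the Efron--Stein integrand, decomposing dyadically by the measure of the cap containing $X_n$, and inserting the sandwich $V(C)\approx\PP_\varrho(C)$ and the standard estimate $(1-\PP_\varrho(C))^{n-1}\le e^{-(n-1)\PP_\varrho(C)}$ reduces the problem to sums of the form
\[
\sum_{k \ge 0} \bigl(2^{-k}/n\bigr)^{2} \cdot \bigl(2^{-k}/n\bigr)^{-(d-1)/(d+1)} \cdot e^{-c\,2^{-k}},
\]
which telescope to $n^{-(d+3)/(d+1)}$ for the volume. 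An analogous computation with one fewer power of the cap volume yields $n^{(d-1)/(d+1)}$ for $\Var_\varrho f_0(K_{(n)})$.

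The principal obstacle is establishing the economic cap covering theorem under the weakened smoothness assumption of rolling/sliding balls instead of $C_+^2$. The rolling ball provides a uniform lower bound on the inward osculating radius at every boundary point, so small caps contain a spherical cap of definite shape; the sliding ball provides a uniform upper bound, preventing caps from becoming tangentially elongated or anomalously flat. Together they force the universal scaling relations $V(C_t)\approx t^{(d+1)/2}$ and $\mathrm{height}(C_t)\approx t$ with constants depending only on $K$, which is precisely what the B\'ar\'any--Macbeath cell machinery needs to proceed. Once this scaling is in place, the remainder of Reitzner's sandwich argument goes through with purely cosmetic modifications to accommodate $\varrho$ and $\lambda$.
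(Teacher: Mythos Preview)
This theorem is quoted from \cite{BSF24} and the present paper does not prove it; the closest thing is the direct argument in Section~\ref{sec:4}, which runs the same scheme in the spherical setting and cites \cite{BSF24} for the one genuinely new step. I compare against that.

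Your overall architecture---Efron--Stein, restriction to the high-probability event $T_n$ that the floating body sits inside the random polytope, economic cap covering, dyadic summation---is exactly the route of \cite{R03}, \cite{BFV10}, \cite{BSF24} and Section~\ref{sec:4}. The gap is in the middle. You bound the increment $V_\lambda(K_{(n)})-V_\lambda(K_{(n-1)})$ by $V_\lambda(C)$ for ``a suitable cap $C$'' and then decompose dyadically by the size of that single cap. But the dome $K_{(n)}\setminus K_{(n-1)}$ is not in general contained in a cap of volume comparable to the minimal cap through $X_n$; on the event $T_n$ one only knows it lies in the wet part of parameter $c(\log n)/n$, and using that crude bound throughout costs an unwanted power of $\log n$. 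The paper (following \cite{R03} and \cite{BSF24}) instead expands the squared increment as a double sum over ordered pairs $(F_I,F_J)$ of visible facets, bounds each simplex volume by the corresponding cap volume $V_+(F_I)$, and then needs the comparison $V_+(F_J)\ll V_+(F_I)$ whenever the two caps intersect and $\diam F_I\ge\diam F_J$. That inequality is \cite{BSF24}*{(8)} (invoked in the paper around \eqref{tartalmazas}), and \emph{this} is where the rolling and sliding ball hypotheses actually enter.

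Your final paragraph therefore mislocates the obstacle. The economic cap covering theorem (Theorem~\ref{R-ecct}) holds for arbitrary convex bodies with no smoothness assumption whatsoever, so there is nothing to re-establish there. What the rolling/sliding ball conditions buy is (i) the cap-comparison inequality just mentioned, and (ii) the sharp wet-part estimate $V(K(t))\approx t^{2/(d+1)}$ that yields the cap count $|\mathcal C_h|\ll 2^{h(d-1)/(d+1)}$ used in \eqref{nagy-sum}. Two smaller points: your displayed dyadic sum seems to drop a factor (either the leading $n$ from Efron--Stein or one power of the cap volume from integrating the location of $X_n$); and Efron's identity relates the \emph{expectations} of $f_0$ and the volume, not their variances---for $\Var_\varrho f_0$ one runs the same facet-pair Efron--Stein argument with the weights $V_+(F_I)V_+(F_J)$ replaced by~$1$.
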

		We use Theorem~\ref{thm:R-weighted-upper} in our proof of Theorem~\ref{upper-bound2}.
		
		\subsection{Gnomonic projection} 
		
		The gnomonic projection maps a $d$-dimensional open hemisphere from the origin radially to a tangent hyperplane. We refer to the point of tangency as the center of the projection.
		We may assume that the convex body $K\subset\Sp^d$ is contained in the (upper) open hemisphere $\Sp^d_+=\{x\in\Sp^d:\langle x, e_{d+1}\rangle>0\}$, and the center of projection is $e_{d+1}$.
		Then the gnomonic projection $g:\Sp^d_+\to\R^d$ is
		$$g(x)=\frac{x}{\langle x, e_{d+1}\rangle}-e_{d+1},$$
		and the hyperplane $\{x\in\R^{d+1}:\langle x, e_{d+1}\rangle=0\}$ is identified with $\R^d$.
		The map $g$ is bijective and $C^\infty$.
		Geodesic arcs of $\Sp^d_+$ are mapped into straight line segments in $\R^d$, thus the image of a spherical convex body is a convex body in the Euclidean sense.
		The gnomonic image of a set $X$ is often denoted by $\overline{X}=g(X)$.

		The gnomonic projection can be defined in the hyperbolic model similarly: $h:\Hy^d\to\R^d$ maps the points of the hyperboloid centrally to the tangent hyperplane $H=\{x\in\R^{d+1}:\langle x, e_{d+1}\rangle=1\}$, then identify $H$ with $\R^d$, i.e. identify
		$$h(x)=\left(\frac{x_1}{x_{d+1}},\dots,\frac{x_d}{x_{d+1}}\right).$$
		As $\|h(x)\|<1$ for all $x\in\Hy^d$, the image $g(\Hy^d)$ is the open unit ball of $\R^d$.
		The map $h$ is $C^\infty$, geodesic arcs are mapped into straight line segments, and it is bijective between $\Hy^d$ and $\inti B^d$.
		The image of a hyperbolic convex body is a convex body in $\R^d$. 
		


		\section{Proof of the upper bound in Theorem~\ref{thm:beta-main}}\label{sec:beta-upper}
		
		For a measurable set $A\subset\R^d$, the probability that a beta-distributed random point falls in $A$ is called the probability content (or beta-content) of $A$, denoted by $\PP_\beta(A)$.
		A crucial part of the proof is to determine the order of magnitude of the probability content of a cap $C$ of height $t$.
		
		\begin{lemma}\label{sapka-PC}
			Let $\varepsilon>0$ and $0<t\leq 1-\varepsilon$. Then
			\begin{equation*}
				\PP_\beta(C(t))\approx t^{\frac{d+1+2\beta}{2}}, \,\text{ as }t\to 0^+.
			\end{equation*}
		\end{lemma}
		
		\begin{proof}
			We define the following functions: the one-dimensional cumulative distribution function $F_{1,\beta}$, the beta function $B(a,b)$ and the incomplete beta function $B(z;a,b)$.
			\begin{align*}
				F_{1,\beta}(h)&=c_{1,\beta} \int_{-1}^h (1-x^2)^\beta\,\d x, \quad h\in[-1,1],\\
				B(a,b)&= \int_0^1 x^{a-1}(1-x)^{b-1}\,\d x, \quad a,b>0,\\
				B(z;a,b)&= \int_0^z x^{a-1}(1-x)^{b-1}\,\d x, \quad 0\leq z\leq1, \quad a,b>0.   
			\end{align*}
			Then, after the substitution $y=1-x^2$, the following hold:
			\begin{align*}
				F_{1,\beta}&=1-c_{1,\beta}\int_h^1 (1-x^2
				)^\beta\,\d x
				=1-\frac{c_{1,\beta}}2\int_0^{1-h^2} y^\beta (1-y)^{-1/2}\,\d y \\
				&=1-\frac{c_{1,\beta}}2 B(1-h^2;\beta+1,\frac12).
			\end{align*}

			According to \cite{KTT19}*{Lemma 4.5.}, 
			\begin{equation*}
				\PP_\beta(C(t))=1-F_{1,\beta+\frac{d-1}{2}}(1-t)
				=\frac{c_{1,\beta+\frac{d-1}2}}{2}B\Big(1-(1-t)^2;\beta+\frac{d+1}2,\frac12\Big).
			\end{equation*}
			
			We apply the following series expansion from \cite{AS64}*{Equation 26.5.4} on incomplete beta functions with $0<z<1$.
			\begin{equation*}
				B(z;a,b)=\frac{z^a(1-z)^b}a\Bigg(1+\sum_{n=0}^\infty \frac{B(a+1,n+1)}{B(a+b,n+1)}z^{n+1} \Bigg).
			\end{equation*}
			Therefore, 
			\begin{multline*}
				\PP_\beta(C(t))=\frac{c_{1,\beta+\frac{d-1}2}}{d+1+2\beta}t^{\frac{d+1+2\beta}2}(2-t)^{\frac{d+1+2\beta}2}(1-t)\\
				\times\Bigg(1+\sum_{n=0}^\infty \frac{B(\beta+\frac d2+\frac32,n+1)}{B(\beta+\frac d2+1,n+1)}\big(1-(1-t)^2\big)^{n+1} \Bigg).
			\end{multline*}
			
			Notice that for fixed $b$, the beta function $B(a,b)$ is monotonically decreasing in $a$.
			Hence,
			\begin{equation*}
				\sum_{n=0}^\infty \frac{B(\beta+\frac d2+\frac32,n+1)}{B(\beta+\frac d2+1,n+1)}\big(1-(1-t)^2\big)^{n+1}\leq \sum_{n=0}^\infty \big(1-(1-t)^2\big)^{n+1}=\frac{1-(1-t)^2}{(1-t)^2},
			\end{equation*}
			where the right-hand side is bounded from above by $\frac1{\varepsilon^2}-1$ since $t\leq1-\varepsilon$.
			
			Thus, we obtain the following inequalities on $\PP_\beta(C(t))$.
			\begin{equation*}
				\frac{c_{1,\beta+\frac{d-1}2}}{d+1+2\beta}(1+\varepsilon)^{\frac{d+1+2\beta}2} \varepsilon \cdot t^{\frac{d+1+2\beta}2}
				\leq\PP_\beta(C(t))\leq
				\frac{c_{1,\beta+\frac{d-1}2}}{d+1+2\beta}2^{\frac{d+1+2\beta}2}\frac1{\varepsilon^2}\cdot t^{\frac{d+1+2\beta}2}.
			\end{equation*}
		\end{proof}

		
		
		Later in this paper, we are going to apply the economic cap covering theorem, which requires its parameter $v$ to be at most $\kappa_d(2d)^{-2d}$.
		Thus, the height $t$ of the caps considered will be bounded by a constant depending on $d$ (and decreasing as $d$ increases).
		We may set $\varepsilon$ to a fixed value; for example, $\varepsilon=1/2$ will be suitable, as we only work with small $t$.
		
		For the volume of a spherical cap $C(t)$ of height $t$, we have $\Vol_d (C(t))\approx t^{\frac{d+1}{2}}$.
		Hence, the volume of a cap can be converted to beta-content as follows.
		There exist positive constants $\gamma_1$ and $\gamma_2$, such that if $\Vol_d(C(t))=v$, then
		$$\gamma_1 v^{\frac{d+1+2\beta}{d+1}}\leq \PP_\beta(C(t))\leq \gamma_2v^{\frac{d+1+2\beta}{d+1}}.$$

		We recall a technical lemma from \cite{BFV10}.
		For given $z \in \Sp^{d-1}$ and $A \in G(d,s)$, their angle $\angle(z,A)$ is defined as the minimal angle $\angle(z,x)$ over all $x \in A$.
		\begin{lemma}[\cite{BFV10} Lemma 1, p. 609.]\label{szoges}
			Let $z \in \Sp^{d-1}$ and (small) $\alpha>0$ be fixed, then $\nu_s\{A\in G(d,s)\, | \, \angle(z,A)\leq
			\alpha\}\approx \alpha^{d-s}$.
		\end{lemma}
		
		Let $T_n$ denote the event that the floating body $\widetilde B=B^d[ (c\log n/n)^{\frac{d+1}{d+1+2\beta}} ]$, which is a concentric ball, is contained in $K_n^\beta$ for a suitable constant $c$ that will be specified later.
		There is a constant $\delta$ such that the probability of $T_n^c$, i.e., the complement of $T_n$, is at most $n^{-\delta c}$.
		This can be seen as follows.
		
		Suppose that $\widetilde B$ has a point $x$ in $B^d\setminus K_n^\beta$. 
		Let $C$ be the cap cut off by the half-space $H^+$ for which $x$ is the center of the $(d-1)$-dimensional ball $B^d\cap H$.
		If $K_n^\beta\subset H^-$, then $C$ does not contain any of the random points.
		If this is not the case, we show that there is an empty part of the cap $C$ with sufficiently large beta-content.
		We may assume that $x$ is in the direction of $\textbf{e}_d$.
		Thus, those coordinate hyperplanes of the standard Cartesian coordinate system that contain $x$ cut $C$ into $2^{d-1}$ identical pieces, and $B^d\setminus C$ into another $2^{d-1}$ larger pieces.
		If all of these parts contained a random point, then $x$ would be in their convex hull.
		Thus, there must be at least one empty coordinate corner, and its beta-content is at least $2^{-(d-1)}$ times that of the cap $C$, by symmetry..
		However, the definition of floating body yields $\Vol_d(C)\geq (c\log n/n)^{\frac{d+1}{d+1+2\beta}}$, and $\PP_\beta(C)\geq \gamma_1c \log n/n$. 
		Then the probability of $T_n^c$ is at most $\left(1-\frac{\gamma_1}{2^{d-1}} c\log n/n\right)^n\leq n^{-\delta c}$ for some $\delta>0$, which depends only on $d$ and $\gamma_1$.
		In the uniform case, more precise asymptotics of probabilities are proven in \cite{BD97}.
		
		
		Finally, we choose $c$ to be sufficiently large, so the conditional expectation on the event $T_n^c$ can be omitted.
		
		We estimate the variance of the intrinsic volumes $V_s(K_n^\beta)$ from above using the Efron-Stein jackknife inequality \cite{ES81} and Kubota's formula
		\begin{equation}\label{Kubota}
			V_s(K)=C_{d,s} \int_{G(d,s)}\Vol_s (K|A) \nu_s(dA),
		\end{equation}
		where $K|A$ denotes the orthogonal projection of $K$ onto $A$ and $C_{d,s}$ is a constant depending only on $d$ and $s$.
		\begin{align}\label{ES-Kub}
			\var (V_s(K_n^\beta)) 
			&\ll n\cdot \Ex[(V_s(K_{n+1}^\beta)-V_s(K_n^\beta))^2\indi(T_n)] \nonumber\\
			&\ll n\cdot \Ex \int_{G(d,s)}\int_{G(d,s)}
			\Vol_s\big((K_{n+1}^\beta|A)\setminus (K_n^\beta|A)\big) \times\nonumber\\
			&\qquad\qquad\times\Vol_s\big((K_{n+1}^\beta|B)\setminus (K_n^\beta|B)\big)\indi(T_n) \nu_s(dA)\nu_s(dB).
		\end{align}


		
		Let $X_m$ denote the collection of the first $m$ random points $x_1,\ldots,x_m$, for $m\in\{1,\ldots,n+1\}$.
		For an index set $I=\{i_1,\ldots , i_s \} \subset \{1,\ldots, n\}$, let $F_I$ denote the convex hull of $x_{i_1}, \ldots, x_{i_s}$, which is an $(s-1)$-dimensional simplex with probability $1$. 
		Note that $(K_{n+1}^\beta|A)\setminus (K_n^\beta|A)$ is either empty or a union of simplices determined by $x_{n+1}|A$ and the facets of $K_n^\beta|A$ that can be seen from $x_{n+1}|A$.
		We say that a facet $F$ of $K_n^\beta|A$ is visible from a point $x\in A\setminus (K_n^\beta |A)$, if the open segment between $x$ and a point in the relative interior of $F$ is disjoint from $K_n^\beta|A$.
		Let $U_A$ (or $U_B$) denote the event that the ($s-1$)-dimensional facet $F_I|A$ (or $F_J|B$, resp.), of $K_n^\beta|A$ is visible from $x_{n+1}|A$.

		The right-hand side of \eqref{ES-Kub} can be estimated as
		\begin{align}\label{nagyonhosszu}
			\eqref{ES-Kub}
			&\ll n\cdot \int_{(B^d)^{n+1}} \int_{G(d,s)}\int_{G(d,s)} \Biggl
			(\,\sum_{F\in
				\F_A(x_{n+1})} \Vol_s([x_{n+1}|A, F])\Biggr )  \notag\\
			&\enspace\times  \Biggl (\,\sum_{F'\in \F_B(x_{n+1})} \Vol_s([x_{n+1}|B, F'])\Biggr )\indi(T_n) \nu_s(\d A)
			\nu_s(\d B) \prod_{i=1}^{n+1}f_{d,\beta}(x_i) \,\d X_{n+1}\notag\\
			&=n\cdot  \int_{G(d,s)}\int_{G(d,s)} \int_{(B^d)^{n+1}} \Biggl
			(\sum_{I} \indi(U_A)\Vol_s([F_I, x_{n+1}]|A)\Biggr ) \notag\\
			&\enspace \times   \!  \Biggl ( \! \sum_{J} \indi(U_B)\Vol_s([F_J, x_{n+1}]|B)\!\!\Biggr )\indi(T_n) \!\! \prod_{i=1}^{n+1}f_{d,\beta}(x_i) \,\d X_{n+1}\,\nu_s(\d A) \nu_s(\d B).
		\end{align}
		where the summation goes over all $s$-tuples $I$ and $J$.
		
		Similarly to \cite{BFV10}, we denote the smaller $s$- and $d$-dimensional caps cut off by the affine hull of $F_I|A$ or $A^\perp+\aff F_I $, resp. from $B^d$ by $C_s(I,A)$ and $C_d(I,A)$ respectively.
		For their volumes we use $V_s(I,A)=\Vol_s(C_s(I,A))$ and
		$V_d(I,A)=\Vol_d(C_d(I,A))$, moreover, for the probability content we use $\PP_\beta(I,A)=\PP_\beta(C_d(I,A))$.
		
		The volumes of the simplices $[F_I, x_{n+1}]|A$ and $[F_J, x_{n+1}]|B$ can be estimated from above by the volumes of the caps $C_s(I,A)$ and $C_s(J,B)$, respectively.
		\begin{multline}\label{seged1}
			\eqref{nagyonhosszu}\ll n\cdot
			\int_{G(d,s)}\int_{G(d,s)} \sum_{I,J} \int_{(B^d)^{n+1}}
			\indi(U_A)  \indi(U_B) V_s(I,A) V_s(J,B)\\
			\times  \indi(T_n)\prod_{i=1}^{n+1}f_{d,\beta}(x_i) \,\d X_{n+1}\,\nu_s(\d A) \nu_s(\d B).
		\end{multline}
		As the summation goes over all $s$-tuples $I$ and $J$, $I$ and $J$ may intersect.
		Let the number of common elements of $I$ and $J$ be $k$.
		
		Without loss of generality, we may choose $I=\{1,\dots,d\}$ and $J=\{s-k+1,\dots2s-k\}$, as the corresponding terms in \eqref{seged1} are independent of the choice of $i_1, \ldots, i_d$ and $j_1,\ldots, j_d$.
		Thus, 
		\begin{multline}\label{seged2}
			\eqref{seged1}\leq n\cdot \sum_{k=0}^s \binom{n}{s}
			\binom{s}{k} \binom{n-s}{s-k} \int_{G(d,s)}\int_{G(d,s)}
			\int_{(B^d)^{n+1}} \!\!\!\!\indi(U_A) \indi(U_B )\\
			\times V_s(I,A)V_s(J,B) \indi(T_n)\prod_{i=1}^{n+1}f_{d,\beta}(x_i) \,\d X_{n+1}\,\nu_s(\d A) \nu_s(\d B).
		\end{multline}
		
		By symmetry, we may assume the event $L$ that $V_s(I,A)\geq V_s(J,B)$. Therefore
		\begin{multline*}
			\eqref{seged2}\ll  \sum_{k=0}^s n^{2s-k+1}  
			\int_{G(d,s)}\int_{G(d,s)} \int_{(B^d)^{n+1}}\indi(U_A)\indi(U_B)V_s(I,A)V_s(J,B)\\
			\times \indi(L)\indi(T_n)
			\prod_{i=1}^{n+1}f_{d,\beta}(x_i) \,\d X_{n+1}\,\nu_s(\d A) \nu_s(\d B).
		\end{multline*}
		
		Since $C_d(I,A)$ and $C_d(J,B)$ have at least $x_{n+1}$ in common, replacing $\indi(U_B)$ to $\indi(N):=\indi(C_d(I,A)\cap C_d(J,B)\neq \emptyset)$ in the integrand does not decrease the integral.
		We estimate the terms in the sum separately for each $k=0,\ldots,d$, denoted by $\Sigma_k$.
		
		\begin{multline*}
			\Sigma_k \ll n^{2s-k+1} \int_{G(d,s)}\int_{G(d,s)} \int_{(B^d)^{n+1}}
			\indi(U_A)V_s(I,A)V_s(J,B)\\
			\times \indi(N) \indi(L)\indi(T_n) \prod_{i=1}^{n+1}f_{d,\beta}(x_i) \,\d X_{n+1}\,\nu_s(\d A) \nu_s(\d B).
		\end{multline*}
		
		If $U_A$ holds, then all the points $x_{2s-k+1},\ldots,x_n$ must be contained in $B^d\setminus C(I,A)$ and $x_{n+1}$ has to be contained in $C(I,A)$.
		Then we can integrate with respect to $x_{2s-k+1},\ldots,x_n,x_{n+1}$, and the condition $T_n$ can be replaced by the condition $W_n=\{\PP_\beta(I,A)\le \gamma_2c\log n/n\}$, where the notation $\{\mathcal{S}\}$ for a probability event denotes the set of those outcomes of the random experiment for which the statement $\mathcal{S}$ holds.
		
		\begin{multline}\label{seged4}
			\Sigma_k \ll n^{2s-k+1} \int_{G(d,s)}\int_{G(d,s)}
			\int_{(B^d)^{2s-k}}
			(1-\PP_\beta(I,A))^{n-2s+k}\PP_\beta(I,A)\\
			\times  V_s(I,A)  V_s(J,B)\indi(N)\indi(L)\indi(W_n) \prod_{i=1}^{2s-k}f_{d,\beta}(x_i) \,\d X_{2s-k} \nu_s(\d A) \nu_s(\d B)
		\end{multline}
		
		We can use the geometric estimations in \cite{BFV10}*{p. 615} that do not depend on the probability distribution; 
		i.e., the conditions $N$ and $L$ imply that the cap $C_d(I,A)$ can be blown up from its center by a constant in order to contain $C_d(J,B)$ as well; 
		furthermore, the angle between the subspace $B$ and the center $z$ of the cap $C_d(I,A)$ has to be at most $b_d V_d(I,A)^{1/(d+1)}$ for a constant $b_d$ depending only on $d$.
		Therefore, after integrating with respect to the variables $x_i$, $i \in J\setminus I$, we obtain that
		
		
		

		
		\begin{multline}\label{seged5}
			\eqref{seged4}\ll n^{2s-k+1}\int_{G(d,s)}\int_{G(d,s)}
			\int_{(B^d)^{s}}
			(1-\PP_\beta(I,A))^{n-2s+k}\PP_\beta(I,A)^{s-k+1} V_s(I,A)^2 \\
			\times  \indi\left(\angle(z,B)\leq 2b_d V_d(I,A)^{1/(d+1)}\right) \indi(W_n)\prod_{i=1}^{s}f_{d,\beta}(x_i) \,\d X_{s}\nu_s(\d A) \nu_s(\d B)
		\end{multline}
		
		Now we are going to estimate the inner integral for a fixed $A\in G(d,s)$ using the economic cap covering theorem.
		Due to the condition $W_n$, every cap $C_d(I,A)$ has beta-content at most $\gamma_2c\log n/n$.
		
		We follow the standard argument: for each positive integer $h$ for which $2^{-h}\leq (c \log n)/n$, let $\mathcal{C}_h$ be a collection of caps $\{C_1, \ldots, C_{m(h)}\}$ forming the economic cap covering of the wet part of $B^d|A$ with $v=(2^{-h})^{\frac{s+1}{d+1+2\beta}}$ (we assume that $n$ is sufficiently large).
		For the $d$-dimensional caps $C_i'$ in $B^d$, whose projections to $A$ are $C_i$, we have that $\Vol_d(C_i')\ll (2^{-h})^{\frac{d+1}{d+1+2\beta}}$. 
		Consider an arbitrary $(x_1, \ldots, x_s)$ with the
		corresponding $C_d(I,A)$ having beta-content at most $\gamma_2c\log n/n$, and associate with
		$(x_1, \ldots, x_s)$ the maximal $h$ such that for some $C_i\in \mathcal{C}_h$, $C_s(I,A)\subset C_i$.
		Such an $h$ clearly exists.
		Then
		\begin{align*}
			V_s(I,A)\leq \Vol_s(C_i)\ll 2^{-h\frac{s+1}{d+1+2\beta}},\\
			V_d(I,A)\leq \Vol_d(C_i')\ll 2^{-h\frac{d+1}{d+1+2\beta}},
		\end{align*}
		and $$\PP_\beta(C_i')\ll2^{-h}.$$
		
		On the other hand, by the maximality of
		$h$,
		\begin{align*}
			V_s(I,A)&\geq 2^{-(h+1)\frac{s+1}{d+1+2\beta}},\\
			V_d(I,A)&\geq 2^{-(h+1)\frac{d+1}{d+1+2\beta}},\\
			\PP_\beta(I,A)&\geq \gamma_2 2^{-(h+1)}.
		\end{align*}
		
		The integrand in \eqref{seged5} can be estimated by 
		\begin{multline*}(1-\PP_\beta(I,A))^{n-2s+k} \PP_\beta(I,A)^{s-k+1}
			V_s(I,A)^2\\
			\ll (1-\gamma_22^{-(h+1)})^{n-2s+k}\cdot 2^{-h(s-k+1)}\cdot
			2^{-2h\frac{s+1}{d+1+2\beta}},
		\end{multline*}
		and, after integrating each $(x_1,\ldots, x_s)$ on its associated $C_i'$, the inner integral is bounded by
		\begin{multline}\label{integrandh}
			\exp(-(n-2s+k)\gamma_22^{-(h+1)})2^{-h(s-k+1)}2^{-2h\frac{s+1}{d+1+2\beta}}(\PP_\beta(C_i'))^s\\
			\ll
			\exp(-(n-2s+k)\gamma_22^{-(h+1)})2^{-h(s-k+1)}2^{-2h\frac{s+1}{d+1+2\beta}}2^{-hs}.
		\end{multline}
		
		The final step in the proof is to calculate the number of elements of $\mathcal{C}_h$.
		The volume of the wet part of $B^s$ (a spherical shell) with parameter $2^{-h\frac{s+1}{d+1+2\beta}}$ is $\Vol_s(B^s(2^{-h\frac{s+1}{d+1+2\beta}}))\approx 2^{\frac{-2h}{d+1+2\beta}}$.
		(The $\approx$ notation makes sense, since $h\to \infty$ as $n\to \infty$). 
		Therefore,
		$$|\mathcal{C}_h|\ll \frac{2^{-2h\frac{1}{d+1+2\beta}}}{2^{-h\frac{s+1}{d+1+2\beta}}}=2^{-h\frac{(1-s)}{d+1+2\beta}}.$$
		
		Assembling the pieces estimating the two inner integrals of \eqref{seged5},  together with the condition $\angle(z,B)\leq 2b_d V_d(I,A)^{1/(d+1)}$ and applying Lemma \ref{szoges} and \eqref{integrandh}, we obtain with $h_0=\left \lfloor c \log n/n \right \rfloor$,
		\begin{align}
			\nonumber & \int_{G(d,s)} \int_{(B^d)^{s}} (1-\PP_\beta(I,A))^{n-2s+k}
			\PP_\beta(I,A)^{s-k+1} V_s(I,A)^2 \times\\
			\nonumber &\times \indi\left(\angle(z,B)\leq 2b_d V_d(I,A)^{1/(d+1)}\right)
			\prod_{i=1}^{s}f_{d,\beta}(x_i)\,\d X_{s} \nu_s(\d B)\\
			\nonumber &\ll  \sum_{h=h_0}^{\infty} \exp(-(n-2s+k)\gamma_2 2^{-(h+1)})2^{-h(s-k+1)}2^{-2h\frac{s+1}{d+1+2\beta}}2^{-hs}\times \\
			\nonumber & \times  |\mathcal{C}_h| \, \nu_s(\{B \, | \, \angle(z,B)< 2b_d 2^{\frac{-h}{d+1+2\beta}}\})\\
			&\ll \sum_{h=h_0}^{\infty} \exp(-(n-2s+k)\gamma_2 2^{-(h+1)})2^{-h[2s-k+1+\frac{d+3}{d+1+2\beta}]}.\label{sum}
		\end{align}
		
		Calculating the sum of the series is very similar to \cite{BFV10}*{p. 617}, with slightly different exponents. 
		The order of magnitude of the sum in \eqref{sum} is $n^{-2s+k-1}n^{-\frac{d+3}{d+1+2\beta}}$, therefore
		
		
		
		\begin{equation*}
			\Sigma_k \ll n^{2s-k+1}\int_{G(d,s)} n^{-(2s-k+1)}n^{-\frac{d+3}{d+1+2\beta}}\nu_s(dA)\ll
			n^{-\frac{d+3}{d+1+2\beta}}.
		\end{equation*}
		This holds for all $k=0,\ldots,d$, thus the sum of them is of the same order of magnitude, which proves the theorem.

		\section{Proof of the lower bound in Theorem~\ref{thm:beta-main}}\label{sec:beta-lower}
		
		The proof of the lower bound essentially follows a method presented by Reitzner \cite{R05}: one defines small caps independently of each other and provides a lower bound on the variance in each cap. This method has been used in various settings; see, for instance, \cite{BFV10}, \cite{BT20}, and \cite{FGV22}. 
		As the geometry of the intrinsic volumes and $B^d$ is the same as in \cite{BFV10}*{Section 4}, we only highlight the differences that are due to in the probability model.
		
		
		For a very small $t>0$, we take a cap $C(x,t)$ in $B^d$ of height $t$, whose center is $x\in\Sp^{d-1}$.
		Let the hyperplane that cuts off $C(x,t)$ be $H(x,t)$.
		Then we consider a regular $(d-1)$-simplex inscribed into $B(x,t)=B^d\cap H(x,t)$, call its vertices $w_1,w_2,\ldots,w_d$.
		Furthermore, let $w_0=x$.
		Then $\Delta=[w_0, w_1, \ldots, w_d]$ is a $d$-dimensional simplex
		inscribed in $C(x,t)$. 
		We define the smaller simplices as
		$$\Delta_j=\Delta_j(x,t)=w_j+\frac{1}{4d}([w_0, w_1, \ldots,
		w_d]-w_j)$$
		for $j=0,1,\ldots,d$.
		Each $\Delta_j$ is a homothetic copy of $\Delta$ obtained by a homothety with center at $w_j$ and factor $1/(4d)$, see Figure~\ref{abra:delta_j}.
		We need to estimate the beta-content of $\Delta_j$.
		\begin{lemma}\label{szimplex-beta}
			For $j=0,\ldots,d$,
			\begin{equation*}
				\PP_\beta(\Delta_j(x,t))\approx t^{\frac{d+1+2\beta}{2}}, 
				\,\text{ as }t\to 0^+.
			\end{equation*}
		\end{lemma}
		
		\begin{figure}
			\centering
			\includegraphics[width=0.75\linewidth]{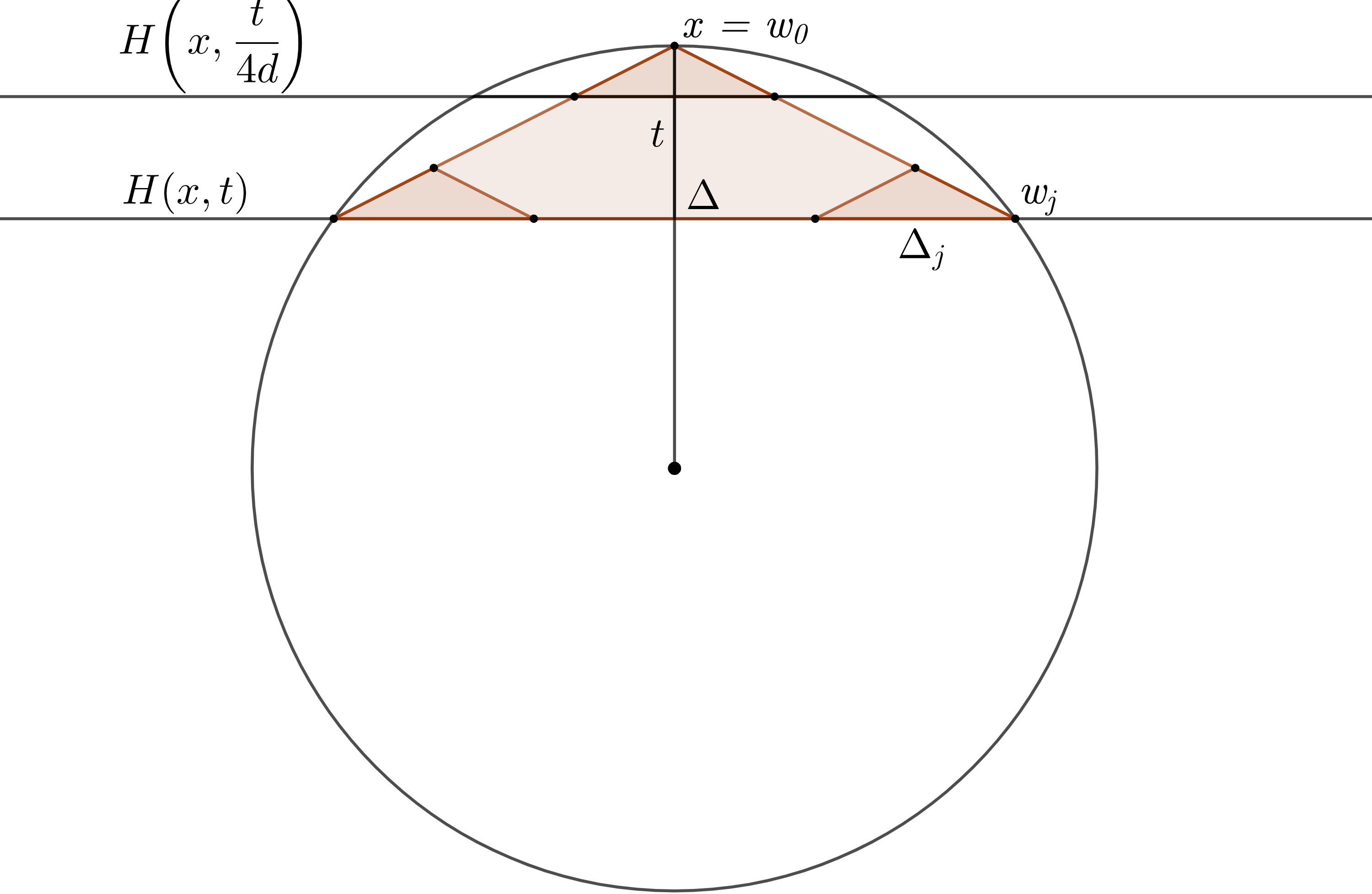}
			\caption{The construction of the small simplices}
			\label{abra:delta_j}
		\end{figure}
		
		\begin{proof} 
			On the one hand, each $\Delta_j$ is contained in a cap which has a probability content of order $t^{\frac{d+1+2\beta}{2}}$, hence the upper bound.
			On the other hand, we prove the lower bound for the case $j=0$, and indicate the modifications required for the other cases.


			We are going to estimate the integral
			$\int_{\Delta_0}(1-\|x\|^2)^\beta\,\d x$.
			We may assume that the apex $w_0$ of the simplex is $(0,\ldots,0,1)$, and the base is a regular $(d-1)$-dimensional simplex inscribed in a $(d-1)$-dimensional ball of radius $\sqrt{\frac{t}{4d}(2-\frac{t}{4d})}$ centered at $(0,\ldots,0,1-\frac{t}{4d})$ in the hyperplane $x_d=1-\frac{t}{4d}$.
			We denote the points in $\Delta_0$ by $x=(x',y):=(x_1,\ldots,x_{d-1},y)$, i.e. $x'\in\R^{d-1}$, to emphasize the role of the $d$-th coordinate.
			Then $\|x\|^2=y^2+\|x'\|^2$, and 
			\begin{equation}\label{s-lemma:1}
				\int_{\Delta_0}(1-\|x\|^2)^\beta\,\d x = \int_{1-\frac{t}{4d}}^1\int_{\delta \cdot\Delta_{d-1}\big(\sqrt{\frac{t}{4d}(2-\frac{t}{4d})}\big)} (1-y^2-\|x'\|^2)^\beta\,\d x'\d y,
			\end{equation}
			where $\Delta_{d-1}\Big(\sqrt{\frac{t}{4d}(2-\frac{t}{4d})}\Big)$ is the regular $(d-1)$-dimensional simplex inscribed in the $(d-1)$-dimensional ball of radius $\sqrt{\frac{t}{4d}(2-\frac{t}{4d})}$; 
			and $\delta=\delta(y)=\frac{4d}{t}(1-y)$ is the linear factor of homothety in $y$.
			
			We apply the substitution $y=1-u$, then
			\begin{equation}\label{s-lemma:2}
				\eqref{s-lemma:1}= \int_0^{\frac{t}{4d}}\int_{u\frac{4d}{t} \cdot\Delta_{d-1}\big(\sqrt{\frac{t}{4d}(2-\frac{t}{4d})}\big)} (2u-u^2-\|x'\|^2)^\beta\,\d x'\d u.
			\end{equation}
			Then we use the fact that the height of $\Delta_j$ is of order $t$, and the linear size of the base is proportional to $\sqrt{t}$, by substituting $u=\frac{vt}{4d}$ and $x'=w\sqrt{\frac{t}{4d}}$.
			Then
			\begin{align}\label{s-lemma:3}
				\eqref{s-lemma:1}&= \int_0^1\int_{v \cdot\Delta_{d-1}\big(\sqrt{2-\frac{t}{4d}}\big)} \Big(\frac{vt}{2d}-\frac{v^2t^2}{(4d)^2}-\frac{t}{4d}\|w\|^2\Big)^\beta \frac{t}{4d}\sqrt{\frac{t}{4d}}^{d-1}\d w\d v \notag \\
				&=\frac{t^{\frac{d+1+2\beta}2}}{(4d)^{\frac{d+1+2\beta}2}} \int_0^1\int_{v \cdot\Delta_{d-1}\big(\sqrt{2-\frac{t}{4d}}\big)} \Big(2v-\frac{v^2t}{4d}-\|w\|^2\Big)^\beta \,\d w\d v.
			\end{align}
			Since $t\to0$, we may assume that $t\leq4d\varepsilon_1$, for some constant $\varepsilon_1$. 
			Then $\Delta_{d-1}\Big(\sqrt{2-\frac{t}{4d}}\Big)\supseteq \Delta_{d-1}\big(\sqrt{2-\varepsilon_1}\big)$. Since the integrand, inherited from the definition of the beta distribution, is non-negative, restricting the integration to $v\cdot\Delta_{d-1}\big(\sqrt{2-\varepsilon_1}\big)$ does not increase the integral.
			Now we divide the proof into two parts based on the sign of $\beta$.
			
			First, let $\beta<0$.
			Then, estimating $\Big(2v-\frac{v^2t}{4d}-\|w\|^2\Big)$ from above gives a lower bound for the integrand.
			Therefore,
			\begin{equation}\label{s-lemma:4}
				\eqref{s-lemma:3}\geq \frac{t^{\frac{d+1+2\beta}2}}{(4d)^{\frac{d+1+2\beta}2}} \int_0^1\int_{v \cdot\Delta_{d-1}(\sqrt{2-\varepsilon_1})}  \big(2v-\|w\|^2\big)^\beta \,\d w\d v.
			\end{equation}
			The integral in $\eqref{s-lemma:4}$ is now independent of $t$, hence the right-hand side is a positive constant times $t^{\frac{d+1+2\beta}2}$.
			
			Now, let $\beta\geq0$.
			In this case, we need to estimate the integrand in \eqref{s-lemma:3} from below. Thus we obtain
			\begin{equation*}
				\eqref{s-lemma:3}\geq \frac{t^{\frac{d+1+2\beta}2}}{(4d)^{\frac{d+1+2\beta}2}} \int_0^1\int_{v \cdot\Delta_{d-1}(\sqrt{2-\varepsilon_1})} \big(2v-v^2\varepsilon_1-\|w\|^2\big)^\beta \,\d w\d v,
			\end{equation*}
			where the integral is again independent of $t$. 
			Since $\|w\|^2\leq v^2(2-\varepsilon_1)$, the integrand is bounded from below by $2(v-v^2)$, which is positive for $v\in(0,1)$. 
			Thus, the right-hand side is a positive constant times $t^{\frac{d+1+2\beta}2}$.
			
			The cases $j=1,\ldots,d$ are similar, relying on the same scaling argument; however, the integration domain in \eqref{s-lemma:1} needs to be modified in the following way.
			The $(d-1)$-dimensional ball of radius $\sqrt{\frac{t}{4d}(2-\frac{t}{4d})}$ circumscribed of the regular $(d-1)$-simplex is not centered, but translated such that one vertex of the simplex lies on the boundary of the $d$-dimensional unit ball.
			Furthermore, the homothety, which is linear in $y$, also changes the center of $\Delta_{d-1}$ together with its size.
			Moreover, $y$ varies between $1-t$ and $1-t+\frac{t}{4d}$, and in \eqref{s-lemma:2}, we substitute $u=1-t+\frac{t}{4d}-y$.
			
		\end{proof}

		We choose a point $z_j$ in each $\Delta_j(x,t)$, fix $x$, $t$ and $z_j \in \Delta_j(x,t)$ for $j=1,\ldots, d$, and write $F=[z_1, \ldots, z_d]$. 
		Similarly to \cite{BFV10}, we define the function $\hat V_s : \Delta_0(x,t) \to \mathbb{R}$ as follows
		$$\hat V_s(z_0)= \int_{L
			\in G(d,s),\\ L\cap \Sigma_2\neq \emptyset} \Vol_s([z_0,F] | L) \nu_s(dL),$$ 
		where $\Sigma_2(x,t)=\Sp^{d-1} \cap \left (x + 2d \sqrt tB^d \right ).$

		\begin{lemma}
			If $Z$ is a random point chosen according to the normalized beta-distribution on $\Delta_0(x,t)$,
			then 
			$${\rm Var} \;\hat V_s(Z)\gg t^{d+1}.$$
		\end{lemma}
		For the proof, we refer to \cite{BFV10}*{Lemma~2}, as it is mainly geometric and the distribution does not make a difference.

		We set 
		\begin{equation*}
			t_n=n^{-\frac2{d+1+2\beta}},
		\end{equation*}
		then, by Lemma~\ref{sapka-PC}, $\PP_\beta(C(x,t_n))\approx 1/n$ for all $x\in \Sp^{d-1}$.
		Let $y_1,\ldots,y_m\in \Sp^{d-1}$ be a maximal set of points such that $d(y_i,y_j)\geq 2\sqrt{\gamma}\sqrt{t_n}$ for $i,j\in\{1,\ldots,m\}$ and for a constant $\gamma$ specified as follows.
		Let $\gamma$ be so large that the caps $C(y_j,\gamma t_n)$ ($j=1,\ldots,m$) are pairwise disjoint. Then
		\begin{equation*}
			m\gg n^{\frac{d-1}{d+1+2\beta}}.
		\end{equation*}
		
		
		Inscribe the simplex $\Delta (y_j,t_n)$ in the cap $C(y_j,t_n)$ for each $j \in \{1,\ldots,m\}$, and construct the small simplices $\Delta_i(y_j,t_n)$.
		Let $A_j$ denote the event that each $\Delta_i(y_j,t_n)$, $i=0,\ldots,d$, contains exactly one random point out of $x_1,\ldots,x_n$, and no other random points are contained in $C(y_j,\gamma t_n)$.
		By Lemmas~\ref{sapka-PC} and \ref{szimplex-beta}, we have
		$$\PP_\beta(\Delta_i(y_j,t_n))\gg 1/n \text{ and } \PP_\beta(C(y_j,\gamma t_n))\ll
		1/n.$$
		Therefore, for $j=1,\ldots,m$, it holds that
		\begin{equation*}
			\mathbb{P}(A_j)\gg \binom {n} {d+1}
			\left(\frac{1}n\right)^{d+1}\left(1-\frac{1}n\right)^{n-d-1}\gg 1.
		\end{equation*}

		Similarly to \cite{BFV10}, we obtain
		\begin{equation*}
			\var V_s(K_n^\beta)\gg m \cdot\PP(A_j)\cdot t_n^{d+1} 
			\gg n^{-\frac{d+3}{d+1+2\beta}}.
		\end{equation*}

		\section{Proof of Theorem~\ref{beta-CHT}}\label{sec:CHT}
		The proof essentially follows the method of Th\"ale, Turchi and Wespi \cite{TTW18}; Besau and Th\"ale \cite{BT20} adapted it to non-Euclidean geometries, and Fodor and Papv\'ari \cite{FP24} used a similar argument for a spindle convex model in $\R^2$.
		We apply the normal approximation bounds that were used by \cite{Cha08} and \cite{LRP17}.
		
		Let $f: \bigcup_{k=1}^n (B^d)^k\to\R$ be a measurable and symmetric function that acts on point configurations of at most $n\in\N$ points in $B^d$.
		For $x=(x_1,\ldots,x_n)\in (B^d)^n$, we define the first- and second-order difference operators applied to $f(x)=f(x_1,\ldots,x_n)$ as
		$$D_if(x):=f(x)-f(x^i) \qquad\text{and}\qquad 
		D_{i,j}f(x):=f(x)-f(x^i)-f(x^j)+f(x^{i,j}),$$
		respectively, where $x^i$ denotes the $(n-1)$-dimensional vector one gets by removing the $i$-th coordinate of $x$, and similarly, $x^{i,j}$ arises by removing the $i$-th and $j$-th coordinates of $x$.
		
		For a random vector $X=(x_1,\ldots,x_n)$ of elements of $B^d$, we introduce the random copies $X'$ and $X''$ of $X$. 
		Let the random vector $Z=(z_1,\ldots,z_n)$ be a recombination of $\{X,X',X''\}$, i.e., for all $i\in\{1,\ldots, n\}$, $z_i\in\{x_i,x'_i,x''_i \}$ .
		
		We need the following quantities:
		\begin{align*}
			Q_1&:=\sup_{(Y,Y',Z,Z')}\Ex\Big[\indi(D_{1,2}f(Y)\neq 0)\indi(D_{1,3}f(Y')\neq 0) \big(D_2f(Z)\big)^2\big(D_3f(Z')\big)^2\Big],\\
			Q_2&:=\sup_{(Y,Z,Z')}\Ex\Big[\indi(D_{1,2}f(Y)\neq 0)\big(D_1f(Z)\big)^2\big(D_2f(Z')\big)^2\Big],\\
			Q_3&:=\Ex\big[\left|D_1f(X)\right|^4\big],\\
			Q_4&:=\Ex\big[\left|D_1f(X)\right|^3\big],
		\end{align*}
		where the suprema in the first two definitions are taken over all $4$- and $3$-tuples of vectors $(Y,Y',Z,Z')$ and $(Y,Z,Z')$, respectively, that are recombinations of $\{X,X',X''\}$.
		
		Let $W=f(x_1,\ldots,x_n)$ and assume that $W$ is centered, with finite positive second moment. 
		Let $G$ denote a standard Gaussian random variable.
		Then, by \cite{LRP17},
		\begin{equation}\label{normal-bound}
			d_W\Bigg(\frac{W}{\sqrt{\var W}},G\Bigg)\ll \frac{\sqrt n}{\var W} \Bigg(
			\sqrt{n^2 Q_1}+\sqrt{n Q_2}+\sqrt{Q_3}\Bigg)+\frac{n}{\big(\var W\big)^\frac32}Q_4.   
		\end{equation}
		
		In our case, for a fixed $\beta>-1$, let $x_1,\ldots,x_n$ be i.i.d. random points according to the beta-distribution, and for a fixed $s\in\{1,\ldots,d\}$, let $W=V_s(K_n^\beta)-\Ex V_s(K_n^\beta)$.
		Then $\Ex W=0$, and by Theorem~\ref{thm:beta-main}, the variance is finite.
		
		As in Section~\ref{sec:beta-upper}, let $T_n$ denote the event that the floating body $\widetilde B=B^d[(c\log n/n)^{\frac{d+1}{d+1+2\beta}}]$ is contained in $K_n^\beta$ for a suitable constant $c$.
		Then there is a constant $\delta$ such that the probability of $T_n^c$ is at most $n^{-\delta c}$.
		Similarly, let $\tau_n$ denote the event that $\widetilde B$ is contained in $\bigcap_{W\in\{Y,Y',Z,Z'\}}[W_4,\ldots,W_n]$.
		The probability of $\tau_n^c$ is smaller than $n^{-\delta'c}$ for a constant $\delta'$.
		
		The main difference between our model and the model studied in \cite{TTW18} is due to the probability distribution of the random points, and not in the geometry.
		
		We estimate the first-order difference operator using Kubota's formula \eqref{Kubota}:
		\begin{equation}\label{diff-Kub}
			D_1V_s(K_n^\beta)=C_{d,s}\int_{G(d,s)}\Vol_s (K_n^\beta|A\setminus K_{n-1}^\beta|A) \indi(X_1\in B^d\setminus \widetilde B) \nu_s(dA),
		\end{equation}
		where $K_{n-1}^\beta$ now denotes $[x_2,\ldots,x_n]$.
		First, we need to calculate the probability of the event that, assuming $T_n$, $x_1$ is not in $K_{n-1}^\beta$, because otherwise the integrand in \eqref{diff-Kub}  is zero.
		Assuming $T_n$, we have the following.
		\begin{equation}\label{vizes-resz}
			\PP(x_1\in B^d\setminus K_{n-1}^\beta)= \PP_\beta(B^d\setminus K_{n-1}^\beta)\ll\PP_\beta(B^d\setminus\widetilde B)
		\end{equation}
		Therefore, we need to estimate the beta-content of the wet part of $B^d$ with parameter $v=(c\log n/n)^{\frac{d+1}{d+1+2\beta}}$. Recall that the beta content of a cap $C$ with volume at most $v$ is at most $\gamma_2c\log n/n$.
		
		
		Theorem \ref{R-ecct} (economic cap covering theorem) states that the wet part $B^d\setminus\widetilde B$ can be covered with caps of volume $v$, and the number of caps needed is of order at most $v^{-\frac{d-1}{d+1}}$.
		We can estimate the probability content of the wet part from above by taking the probability content of all the caps used in the covering. Thus,
		\begin{equation*}
			\eqref{vizes-resz}\ll v^{-\frac{d-1}{d+1}}\PP_\beta(C) \ll \Big(\frac{\log n}{n}\Big)^{\frac{2+2\beta}{d+1+2\beta}}.
		\end{equation*}
		
		Now, let $z\in\Sp^{d-1}$ be the closest point to $x_1$ on the boundary of $B^d$, and define its visibility region as
		$$\Vis_z(n):=\{x\in B^d\setminus \widetilde B:[x,z]\cap\widetilde B=\emptyset\}.$$
		Then for the diameter of the visibility region, it holds that $\diam \Vis_z(n)\ll (\log n/n)^{\frac1{d+1+2\beta}}$ (cf. \cite{Vu05}*{Lemma 6.2}).
		Similarly to \cite{TTW18}, we construct the cap $C$, which is the convex hull of all the points of $\Sp^{d-1}$, whose distance from $z$ is at most $\diam \Vis_z(n)$.
		For the central angle $\alpha$ of $C$, we have $\alpha\ll\big(\frac{\log n}{n}\big)^{\frac1{d+1+2\beta}}$ and the integrand in \eqref{diff-Kub} can only be positive if $\angle(z,A)\ll\alpha$.
		By Lemma~\ref{szoges}, $\nu_s\{A\in G(d,s)\, | \, \angle(z,A)\ll\big(\frac{\log n}{n}\big)^{\frac1{d+1+2\beta}}\}\ll \big(\frac{\log n}{n}\big)^{\frac{d-s}{d+1+2\beta}}$.
		Although the projection of $C$ onto a subspace $A\in G(d,s)$ is generally not a cap, a simple trigonometric calculation shows that, assuming $\angle(z,A)\ll\alpha$, $C|A$ is contained in a cap whose height is of the same order as the height of $C$. 
		We can estimate the volume of $C$ and the $s$-dimensional volume of its projection as
		$$\Vol_d(C)\ll\Big(\frac{\log n}n\Big)^{\frac{d+1}{d+1+2\beta}} \quad \text{ and }\quad \Vol_s(C|A)\ll\Big(\frac{\log n}n\Big)^{\frac{s+1}{d+1+2\beta}}.$$
		
		Since $(K_n^\beta|A)\setminus (K_{n-1}^\beta|A)\subset C|A$, \eqref{diff-Kub} can be estimated from above as
		\begin{align}\label{diff}
			D_1V_s(K_n^\beta)&\ll\Vol_s(C|A)\nu_s\{A\in G(d,s)\, | \, \angle(z,A)\leq\alpha\}\indi(x_1\in B^d\setminus\widetilde B)\notag\\ 
			&\ll \Big(\frac{\log n}n\Big)^{\frac{d+1}{d+1+2\beta}}\indi(x_1\in B^d\setminus\widetilde B).
		\end{align}
		Therefore, we estimate the expectation of $\big(D_1V_s(K_n^\beta)\big)^p$ as follows.
		In the definition of $\widetilde B$, we choose $c$ to be sufficiently large, so the conditional expectation on the event $T_n^c$ can be omitted.
		For $p\in\{1,\ldots,4\}$,
		\begin{equation}\label{diff-ex}
			\Ex[\big(D_1V_s(K_n^\beta)\big)^p]\ll\Big(\frac{\log n}n\Big)^{\frac{p(d+1)}{d+1+2\beta}}\PP_\beta(B^d\setminus\widetilde B)
			\ll \Big(\frac{\log n}n\Big)^{\frac{p(d+1)+2+2\beta}{d+1+2\beta}}.
		\end{equation}
		
		From \eqref{diff} we also obtain that, assuming $\tau_n$, 
		\begin{equation}
			D_if(Z^\ast)^2\ll \Big(\frac{\log n}n\Big)^{\frac{2(d+1)}{d+1+2\beta}},
		\end{equation}
		for all $i\in\{1,2,3\}$ and $Z^\ast\in\{Z,Z'\}$.
		We recall from \cite{TTW18}, that assuming $\tau_n$, it holds for $j\in\{2,3\}$ and $Y^\ast\in\{Y,Y'\}$, that
		$$\{D_{1,j}f(Y^\ast)\neq0\}\subset\{y^\ast_1\in B^d\setminus \widetilde B\}\cap\Big\{y^\ast_j\in \bigcup_{x\in\Vis_{y^\ast_1}(n)}\Vis_x(n)\Big\},$$
		and 
		\begin{equation}
			\Ex[\indi(D_{1,j}f(Y^\ast)\neq0)\indi(\tau_n)]\ll\PP_\beta(B^d\setminus\widetilde B)\sup_{z\in B^d\setminus\widetilde B}\PP_\beta\Big(\bigcup_{x\in\Vis_z(n)}\Vis_x(n)\Big).
		\end{equation}

		The previous union is contained in a cap whose diameter is of order $\big(\frac{\log n}{n}\big)^{\frac{1}{d+1+2\beta}}$, so its height is of order $\big(\frac{\log n}{n}\big)^{\frac{2}{d+1+2\beta}}$, and therefore its beta-content can be estimated as
		\begin{equation}
			\Delta(n):=\sup_{z\in B^d\setminus\widetilde B}\PP_\beta\Big(\bigcup_{x\in\Vis_z(n)}\Vis_x(n)\Big)\ll\frac{\log n}n.
		\end{equation}

		Analogously, assuming $\tau_n$, it holds for $y_1=y_1'$, that
		$$\{D_{1,2}f(Y)\neq0\}\cap\{D_{1,3}f(Y')\neq0\}\subset\{y_1\in B^d\setminus \widetilde B\}\cap\Big\{\{y_2,y_3'\}\in \!\!\bigcup_{x\in\Vis_{y_1}(n)}\!\!\Vis_x(n)\Big\},$$
		and, since the case $y_1\neq y_1'$ gives a smaller factor,
		\begin{equation}\label{D12D13}
			\Ex[\indi(D_{1,2}f(Y)\neq0)\indi(D_{1,3}f(Y')\neq0)\indi(\tau_n)]\ll\PP_\beta(B^d\setminus\widetilde B)\Delta(n)^2.
		\end{equation}

		Now, using \eqref{diff-ex}--\eqref{D12D13}, we obtain
		\begin{align*}
			Q_1&\ll \Big(\frac{\log n}n\Big)^{\frac{4(d+1)}{d+1+2\beta}} \Big(\frac{\log n}{n}\Big)^{\frac{2+2\beta}{d+1+2\beta}}
			\Big(\frac{\log n}{n}\Big)^{2}\ll\Big(\frac{\log n}{n}\Big)^{\frac{6(d+1)+2+6\beta}{d+1+2\beta}},    \\
			Q_2&\ll \Big(\frac{\log n}n\Big)^{\frac{4(d+1)}{d+1+2\beta}} \Big(\frac{\log n}{n}\Big)^{\frac{2+2\beta}{d+1+2\beta}}
			\frac{\log n}{n}\ll\Big(\frac{\log n}{n}\Big)^{\frac{5(d+1)+2+4\beta}{d+1+2\beta}},    \\
			Q_3&\ll \Big(\frac{\log n}n\Big)^{\frac{4(d+1)+2+2\beta}{d+1+2\beta}},    \\
			Q_4&\ll \Big(\frac{\log n}n\Big)^{\frac{3(d+1)+2+2\beta}{d+1+2\beta}}.
		\end{align*}
		
		The terms in \eqref{normal-bound} can be estimated from above as
		\begin{align*}
			\frac{\sqrt n}{\var V_s(K_n^\beta)}\sqrt{n^2 Q_1}&\ll n^{-\frac12+\frac{1+\beta}{d+1+2\beta}} (\log n)^{\frac{3(d+1)+1+3\beta}{d+1+2\beta}},    \\
			\frac{\sqrt n}{\var V_s(K_n^\beta)}\sqrt{n Q_2}&\ll n^{-\frac12+\frac{1+\beta}{d+1+2\beta}} (\log n)^{\frac{\frac52(d+1)+1+2\beta}{d+1+2\beta}},    \\
			\frac{\sqrt n}{\var V_s(K_n^\beta)}\sqrt{Q_3}&\ll n^{-\frac12+\frac{1+\beta}{d+1+2\beta}} (\log n)^{\frac{2(d+1)+1+\beta}{d+1+2\beta}},    \\
			\frac{n}{(\var V_s(K_n^\beta))^\frac32}Q_4 &\ll n^{-\frac12+\frac{1+\beta}{d+1+2\beta}} (\log n)^{\frac{3(d+1)+2+2\beta}{d+1+2\beta}}.
		\end{align*}
		
		Thus, for the Wasserstein distance, we obtain
		\begin{multline*}
			d_W\Bigg(\frac{V_s(K_n^\beta)-\Ex V_s(K_n^\beta)}{\sqrt{\var V_s(K_n^\beta)}},G\Bigg)\ll n^{-\frac12+\frac{1+\beta}{d+1+2\beta}} \Big((\log n)^{\frac{3(d+1)+1+3\beta}{d+1+2\beta}} \\ + (\log n)^{\frac{\frac52(d+1)+1+2\beta}{d+1+2\beta}} 
			+ (\log n)^{\frac{2(d+1)+1+\beta}{d+1+2\beta}} + (\log n)^{\frac{3(d+1)+2+2\beta}{d+1+2\beta}}. \Big)
		\end{multline*}

		\section{Proof of Theorem~\ref{thm:f-vector}}
		First, we sketch the proof of the upper bound. 
		Our argument is similar to that of Reitzner \cites{R05, R05b}, and is based on a suitable modification of the proof of the upper bound of Theorem~\ref{thm:beta-main} in Section~\ref{sec:beta-upper}.
		
		The variance is again estimated from above using the Efron-Stein inequality.
		\begin{equation}\label{kfaces-ES}
			\Var f_k(K_n^\beta)\ll n \cdot\Ex[(f_k(K_{n+1}^\beta)-f_k(K_n^\beta))^2\indi(T_n)],
		\end{equation}
		where $T_n$ is the condition that the floating body $\widetilde B$ is contained in $K_n^\beta$.
		
		Let $F_n$ denote the number of facets of $K_n^\beta$ that can be seen from $x_{n+1}$. 
		If $x_{n+1}\in K_n^\beta$, then $F_n=0$ and $K_{n+1}^\beta$ has the same number of $k$-dimensional faces as $K_n^\beta$.
		Otherwise, $F_n$ is positive.
		For each $k\in\{0,1,\ldots,d-1\}$, let $f_k^+$ denote the number of $k$-faces of $K_{n+1}^\beta$ that are not contained in $K_n^\beta$, and let $f_k^-$ denote the number of $k$-faces of $K_n^\beta$ that are not faces of $K_{n+1}^\beta$.
		We recall from \cite{R05b}, that $|f_k(K_{n+1}^\beta)-f_k(K_n^\beta)|\leq f_k^++f_k^-$, furthermore,
		$$f_k^+\leq \binom{d}{k}F_n, \quad\text{ and }\quad f_k^-\leq\binom{d}{k+1}F_n,$$
		therefore, $\Ex[(f_k(K_{n+1}^\beta)-f_k(K_n^\beta))^2]\ll\Ex[F_n^2].$
		
		We recall some notation from Section~\ref{sec:beta-upper} with $s=d$ (in this case, the projections are the identity map, and therefore, not indicated):
		$I,J\subset\{1,\ldots,n\}$ are index sets with $d$ elements each, $F_I$ and $F_J$ denote the convex hulls of $x_{i_1},\ldots,x_{i_d}$, and $x_{j_1},\ldots,x_{j_d}$, respectively.
		Moreover, $U_A$ and $U_B$ denote  the events that $F_I$ and $F_J$, resp., is visible from $x_{n+1}$.
		Then \eqref{kfaces-ES} can be estimated from above as follows.
		\begin{align*}
			\eqref{kfaces-ES}&\ll n\cdot\Ex[F_n^2\cdot\indi(T_n)] \\
			&\ll n\cdot  \int_{(B^d)^{n+1}} \Biggl
			(\sum_{I} \indi(U_A)\Biggr )\Biggl (\sum_{J} \indi(U_B)\Biggr )  \indi(T_n)\prod_{i=1}^{n+1}f_{d,\beta}(x_i) \,\d X_{n+1}.
		\end{align*}
		
		The integral is very similar to \eqref{nagyonhosszu}, the only differences are that now $s=d$, the projections are removed, and the terms $\Vol_d([F_I, x_{n+1}])$ and $\Vol_d([F_J, x_{n+1}])$ are missing.
		A calculation analogous to the one in the proof of the upper bound in Theorem~\ref{main:upper-bound} yields an upper bound of order $n^{\frac{d-1}{d+1+2\beta}}$.
		
		The proof of the lower bound of $\Var f_k$ is based on that of the lower bound of the variance of the intrinsic volumes in Section~\ref{sec:beta-lower}, and follows the argument of \cite{R05} as well, suitably modified for the beta-distribution.
		
		Let the caps $C(y_j,\gamma t_n)$ be defined as in Section~\ref{sec:beta-lower}, as well as the simplices $\Delta_i(y_j,t_n)$, for $j=1,\ldots,m$ and $i\in\{0,1,\ldots,d\}$.
		Let $B_j$ denote the event that $\Delta_0(y_j,t_n)$ contains two random points out of $x_1,\ldots,x_n$, each $\Delta_i(y_j,t_n)$, $i=1,\ldots,d$ contains one random point, and no other random points are contained in $C(y_j,\gamma t_n)$.
		For the probability of $B_j$, $j=1,\ldots,m$, by Lemmas~\ref{sapka-PC} and ~\ref{szimplex-beta}, we have
		\begin{equation*}
			\PP(B_j)\gg\binom{n}{d+2}\left(\frac{1}{n}\right)^{d+2}\left(1-\frac1n\right)^{n-d-2}\gg 1.
		\end{equation*}
		
		Assuming $B_j$, let $Y_j$ and $Z_j$ denote the points in $\Delta_0(y_j,t_n)$, and let $z_{j,1},\ldots,z_{j,d}$ denote the points in $\Delta_i(y_j,t_n)$, $i=1,\ldots,d$.
		For a fixed $j$, the convex hull of $\{Y_j,Z_j,z_{j,1},\ldots,z_{j,d}\}$ is either a simplex, or both $Y_j$ and $Z_j$ are vertices. 
		Both of them arise with positive probability.
		Thus, $f_k([Y_j,Z_j,z_{j,1},\ldots,z_{j,d}])$ can take at least two different values with positive probability.
		Thus, taking the variance for $Y_j$ and $Z_j$,
		\begin{equation*}
			\Var_{Y_j,Z_j} f_k([Y_j,Z_j,z_{j,1},\ldots,z_{j,d}])\gg 1.
		\end{equation*}

		Then, similarly to \cite{R05}, we obtain
		\begin{align*}
			\Var f_k(K_n^\beta)&\geq \Ex\Big(\sum_{j=1}^m \Var_{Y_j,Z_j} f_k([Y_j,Z_j,z_{j,1},\ldots,z_{j,d}]) \indi(B_j)\Big)\\
			&\gg m\cdot\PP(B_j)\gg n^{\frac{d-1}{d+1+2\beta}}.
		\end{align*}

		\section{Proof of Theorem~\ref{upper-bound2}}
		\subsection{The spherical case}
		
		Assume that a spherical ball of radius $r$ rolls freely in $K$.
		Let $e_{d+1}$ be the center of the circumsphere of $K$. Then $K$ is contained in the upper hemisphere. 
		
		For a boundary point $x\in\bd K$, denote by $B_x$ the ball of radius $r$ for which $x\in\bd B_x$ and $B_x\subset K$. 
		Then $g$ maps balls in $\Sp^d_+$ to ellipsoids in $\R^d$, and those balls, whose center is $e_{d+1}$, are mapped to Euclidean balls in $\R^d$.
		Thus, for each $x\in\bd K$, $\overline{B}_x=g(B_x)$ is an ellipsoid in $\overline{K}$.
		Due to compactness, there is a maximal $\kappa_{\max}$ among the principal curvatures of all $\overline{B}_x$. 
		Then, a Euclidean ball of radius $1/\kappa_{\max}$ rolls freely in any $\overline{B}_x$ (see \cite{Sch14}*{Corollary 3.2.13.}) and, in turn, it rolls freely in $\overline{K}$, thus $\overline{K}$ also has a rolling ball.
		
		Now, assume that $K$ slides freely in a spherical ball of radius $R$.
		Then its spherical polar body $K^\ast$ has a rolling ball of radius $(\pi/2-R)$ 
		This can be seen in the following way:
		For a boundary point $x$ on $\bd K^\ast$, there is at least one supporting hypersphere $G^\ast$. Its polar $G$ is a point on the boundary of $K$.
		Since $K$ has a sliding ball, a ball $B_G$ of radius $R$ contains $K$, and $G$ is on its boundary.
		Then $B_G^\ast$ is a ball of radius $(\pi/2-R)$ contained in $K^\ast$ and intersects $G^\ast$ at a boundary point $y$ of $K^\ast$.
		If $y\neq x$ then $\bd K^\ast$ contains a great circle segment, thus $\bd K$ must have a point where it is not smooth.
		However, this contradicts the assumption that $K$ has a rolling ball, therefore $x=y$, and $K^\ast$ also has a rolling ball.
		
		Note that the assumptions on $K$ yield that
		the rolling ball inside $K^{\ast}$ is in the lower open hemisphere for all $x\in\bd K^{\ast}$.
		
		Let $\tilde{g}$ be the gnomonic projection
		whose center is $-e_{d+1}$. Let $(\cdot)^\circ$ denote Euclidean polarity in $\R^d$.  It is known, see Schneider \cite{Sch22}*{Lemma 3.2.2.}, that $g(K)^\circ=\tilde{g}(K^\ast)$. By the above argument, the existence of a rolling ball of $K^\ast$ implies that its image, $\tilde{g}(K)^\circ$ has a rolling ball too.
		Hug \cite{H00}*{Prop. 1.45.} proved that if a convex body $L\subset \R^d$ has a rolling ball, then $L^\circ$ slides freely in a ball of finite radius.
		Hence, $g(K)$ slides freely in a ball.
		
		The gnomonic projection maps $\Vol_{\Sp^d}$ into the Lebesgue measure in $\R^d$ with the density $\psi(x)=(1+\|x\|^2)^{-(d+1)/2}$ (see \cite{BW16}*{Proposition 4.2}), and $\Vol_{\Sp^d} (K_n)$ has the same distribution as the weighted volume of the convex hull of $n$ i.i.d. random points in $\R^d$ (cf. \cite{BT20}*{Section 5.1}).
		Thus, we may apply Theorem~\ref{thm:R-weighted-upper}, as the boundary conditions are satisfied for $\overline{K}$.
		
		\subsection{The hyperbolic case}
		Assume that the center of a maximal radius inscribed ball $B_1$ in $K$ is the point $e_{d+1}$. 
		Then $h(e_{d+1})=o$ and $h(B_1)=\overline{B}_1\subset\inti B^d$ in $\R^d$.
		
		Suppose that $K$ slides freely in a ball of radius $R$. 
		Let $B_2$ denote the smallest ball centered at $e_{d+1}$ that contains the union of all the sliding balls of radius $R$ of $K$. 
		The boundary of $K$ is contained in the spherical shell determined by the concentric balls $B_1$ and $B_2$, and $\bd \overline{K}$ is in the (Euclidean) spherical shell determined by $\overline{B}_1$ and $\overline{B}_2$.
		
		The gnomonic image of a hyperbolic ball contained in $B_2$ is an ellipsoid in $\overline{B}_2$ (contained in $\inti B^d$ in $\R^d$).
		It is clear that in the images of such balls
		the ratio of the ellipsoid's largest and shortest axes is bounded from above.
		
		For a fixed boundary point $x\in\bd K$, the image of the sliding ball $B_x$ of $K$ at $x$ is an ellipsoid that slides freely in a Euclidean ball of radius $R_x$.
		By compactness, there is a maximum radius $\Tilde{R}$ among all such $R_x$. Then $\overline{K}$ slides freely in a radius $\Tilde{R}$ ball.
		
		Now suppose that a ball of radius $r$ rolls freely in $K$.
		Then at every boundary point $x$, a ball inside of $K$ intersects $K$ at $x$. 
		The gnomonic projection of this ball is an ellipsoid in $\overline{K}$ for which the ratio of the axes is bounded.
		There is a maximal principal curvature $\kappa_{\max}$ among all these ellipsoids, and a ball of radius $1/\kappa_{\max}$ rolls freely in all of them. Consequently, it rolls freely in $\overline{K}$.
		
		The hyperbolic volume of $K$ has the same distribution as the weighted volume of the convex hull of $n$ i.i.d. random points chosen by the probability density function $\psi/\int_{\overline{K}}\psi(x)\,\d x$ in $\R^d$, where $\psi=(1-\|x\|^2)^{-(d+1)/2}$, $x\in\inti B^d$ is the density of the image measure of $\Vol_{\Hy^d}$.
		For more information, see \cite{BT20}*{Section 5.2}.
		
		Thus, the conditions of Theorem~\ref{thm:R-weighted-upper} are satisfied, so we may apply its statement, which yields the desired asymptotic variance upper bound for $\Vol_{\Hy^d}(K_n)$.

		\section{Direct proof of Theorem~\ref{upper-bound2}}\label{sec:direct}
		Essentially the same way as in Section~\ref{sec:beta-upper}, the variance upper bound in Theorem~\ref{upper-bound2} can be shown via a non-euclidean version of the economical cap theorem below.
		
		\begin{lemma}\label{gnom-nagysagrendek}
			The gnomonic projection preserves the order of magnitude of the volume of caps.
		\end{lemma}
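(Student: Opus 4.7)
The plan is to reduce the statement to a uniform Jacobian estimate for the gnomonic projection on the (compact) convex body under consideration. First I would observe that, since geodesics of $\Sp^d_+$ (respectively $\Hy^d$) are mapped to straight line segments in $\R^d$, a spherical/hyperbolic half-space cuts out a Euclidean half-space on the image side; consequently, for any cap $C = K \cap H^+$ of $K$, the image $g(C) = \overline{K} \cap \overline{H^+}$ (or $h(C) = \overline{K} \cap \overline{H^+}$) is a Euclidean cap of $\overline{K}$. Thus it suffices to compare the non-Euclidean volume of $C$ with the Euclidean volume of its gnomonic image.

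Next, I would compute the Jacobian of the inverse gnomonic projection. For the spherical case, using $g^{-1}(y) = (y + e_{d+1})/\sqrt{1+\|y\|^2}$, a standard calculation gives that the spherical volume element pulls back to
\begin{equation*}
    \d\Vol_{\Sp^d}(g^{-1}(y)) = \frac{1}{(1+\|y\|^2)^{(d+1)/2}}\, \d y.
\end{equation*}
For the hyperbolic case, an analogous computation with $h^{-1}(y) = (y,1)/\sqrt{1-\|y\|^2}$ yields
\begin{equation*}
    \d\Vol_{\Hy^d}(h^{-1}(y)) = \frac{1}{(1-\|y\|^2)^{(d+1)/2}}\, \d y.
\end{equation*}
The key observation is now that $\overline{K}$ is a compact subset of $\R^d$ (in the spherical case, because $K$ lies in an open hemisphere; in the hyperbolic case, because $K$ is compact in $\Hy^d$, so $\overline{K}$ is a compact subset of $\inti B^d$). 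Therefore, on $\overline{K}$ the Jacobian factor is bounded above and below by positive constants $c_1(K), c_2(K)>0$ depending only on $K$.

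From here I would conclude by the change of variables formula: for any cap $C\subset K$,
\begin{equation*}
    c_1(K)\, V(\overline{C}) \le \Vol_{\M^d}(C) \le c_2(K)\, V(\overline{C}),
\end{equation*}
which is precisely the claim that the order of magnitude of the volume of caps is preserved. I do not anticipate a genuine obstacle: the argument is a direct application of compactness plus the change of variables formula, and the only mildly delicate point is noting that the hyperbolic gnomonic image $\overline{K}$ sits strictly inside the open unit ball, so the singularity of the Jacobian at $\|y\|=1$ is never approached.
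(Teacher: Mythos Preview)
Your proposal is correct and follows essentially the same approach as the paper: both arguments rest on the fact that the Jacobian density of the gnomonic projection is bounded above and below by positive constants on the compact set $\overline{K}$, yielding a two-sided volume comparison. The paper expresses the Jacobian in the form $\langle x,e_{d+1}\rangle^{-(d+1)}$ and bounds it by its extremal value on $K$, while you write it as $(1\pm\|y\|^2)^{-(d+1)/2}$ on $\overline{K}$ and invoke the change-of-variables formula explicitly; these are the same factor in different coordinates.
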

		\begin{proof}
			Assume that $K\in\mathcal{K}(\Sp^d)$ and $e_{d+1}$ is the center of the minimum radius ball (circumball) containing $K$. Note that the radius $R_K$ of the circumball is less than $\pi/2$. 
			Consider a cap $C$ of $K$ with $\Vol_{\Sp^d} (C)=v$.     
			Then
			\begin{equation*}
				V(g(C))\le v\cdot \frac{1}{\langle u, e_{d+1}\rangle^{d+1}},
			\end{equation*}
			where $u$ is the farthest point of $C$ from the center of projection, $e_{d+1}$. On the one hand, $V(g(C))>v$.
			On the other hand, the quantity $\frac{1}{\langle u, e_{d+1}\rangle^{d+1}}$ is bounded from above by a constant $c_{K,d}$ depending only on $K$ and $d$.
			Thus, $V(g(C))\leq c_{K,d}\cdot v$.
			
			For the hyperbolic case, assume again that the center of the circumball of $K\in\mathcal{K}(\Hy^d)$ is $e_{d+1}$. 
			Let $D$ be a hyperbolic cap of $K$ with $\Vol_{\Hy^d} (D)=v$. Then $V(h(D))< v$,
			and $V(h(D))\geq v\cdot \frac{1}{\langle u, e_{d+1}\rangle^{d+1}}$, where $\frac{1}{\langle u, e_{d+1}\rangle^{d+1}}$ is bounded from below, similarly to the spherical case.  
		\end{proof}
		
		\begin{tetel}
			Let $K\in\mathcal{K}(\Sp^d)$ (or $\mathcal{K}(\Hy^d)$, resp.) and $0<v< (2d)^{-2d}\cos^{d+1}R_K$  (or $0<v< (2d)^{-2d}$ resp.). Then there exist caps $C_1,\dots,C_m$ and pairwise disjoint convex sets $C'_1,\dots,C'_{m'}$, with $m\approx m'$, such that
			\begin{enumerate}[(i)]
				\item $\bigcup_{i=1}^{m'}C'_i\subset K(v)\subset \bigcup_{i=1}^m C_i$,
				\item $\Vol_{\M^d} C_i\ll v$, $(i=1,\dots,m)$ and $\Vol_{\M^d} C'_i\gg v$, $(i=1,\dots,m')$, where $\M^d=\Sp^d$ or $\Hy^d$,  \label{ecct-nagysagrendek}
				\item for every cap $C$ of volume at most $v$, there is a $C_i$ containing $C$,
				\item every convex set $C'_i$ is contained in some $C_j$.
			\end{enumerate}
		\end{tetel}
		
		\begin{proof}
			We only give the proof for the case of $\Sp^d$, the hyperbolic variant can be shown essentially in the same way.
			The wet part $K(v)$ is the union of caps $C_\alpha$ of spherical volume at most $v$.
			The images of the caps $C_\alpha$ under the gnomonic projection $g$ are the caps $\overline{C}_\alpha=g(C_\alpha)$ in $\overline{K}=g(K)$ with possibly different volumes of $\Bar{v}_\alpha$.
			By compactness, there exists a minimal and a maximal volume among $\Bar{v}_\alpha$, let us denote these by $\Bar{v}_{\min}$ and $\Bar{v}_{\max}$, respectively.
			Consider the (Euclidean) wet parts in $\overline{K}$ with parameters $\Bar{v}_{\min}$ and $\Bar{v}_{\max}$. Then
			\begin{equation*}
				\overline{K}(\Bar{v}_{\min})\subset g(K(v)) \subset \overline{K}(\Bar{v}_{\max}).
			\end{equation*}
			
			
			Apply Theorem~\ref{R-ecct} to both wet parts.
			Denote the caps and the convex sets contained in them provided by Theorem~\ref{R-ecct} corresponding to $\Bar{v}_{\max}$ by $D_i$ and $D'_i$ $(i=1,\dots,m)$ and those corresponding to $\Bar{v}_{\min}$ by $E_i$ and $E'_i$ $(i=1,\dots,m)$. Let $C_i=g^{-1}(D_i)$, $(i=1,\dots,m)$ and $C'_i=g^{-1}(E'_i)$ for some indices $i=1,\dots,m'$ specified later.
			
			The large caps $D_i$ cover $\overline{K}(\Bar{v}_{\max})$ so they cover $g(K(v))$ as well. 
			Furthermore, for any spherical cap $C$ of (spherical) volume at most $v$, its image $g(C)$ has volume at most $\Bar{v}_{\max}$, therefore, it is contained in some $D_i$.
			
			The sets $E'_i$ are disjoint and contained in caps of volume $\Bar{v}_{\min}$, therefore they are contained in $g(K(v))$, and there are caps $D_{j(i)}$ such that $E'_i\subset D_{j(i)}$ for each $i\in\{1,\dots,m\}$. 
			However, the same index $j$ can belong to multiple $i$'s, so some of the sets $E'_i$ can be dropped to avoid multiplicity. 
			The number of these sets cannot be too large due to the criteria on the volumes of $D_i$ and $E'_i$.
			The preimages of the remaining sets will be the spherical sets $C'_i$, $(i=1,\dots,m)$ with $m\approx m'$.
			
			Lemma~\ref{gnom-nagysagrendek} and the corresponding part of the Euclidean theorem yield \eqref{ecct-nagysagrendek}.
		\end{proof}

		\section{Acknowledgements}
		The second author was supported by the M\'oricz Doctoral Scholarship.
		
		The second author was also supported by the University Research Scholarship Programme (EKÖP) no. EKÖP-452-SZTE, which has been implemented with the support provided by the Ministry of Culture and Innovation of Hungary and the National Research, Development and Innovation Fund.
		
		This research was supported by NKFIH
		project no. 150151, which has been implemented with the support provided by
		the Ministry of Culture and Innovation of Hungary from the National
		Research, Development and Innovation Fund, financed under the
		ADVANCED\_24 funding scheme.
		
		This research was supported by project TKP2021-NVA-09. Project no. TKP2021-NVA-09 has been implemented with the support provided by the Ministry of Innovation and Technology of Hungary from the National Research, Development and Innovation Fund, financed under the TKP2021-NVA funding scheme.

		\begin{bibdiv}
			\begin{biblist}
				
				\bib{AS64}{book}{
					author={Abramowitz, Milton},
					author={Stegun, Irene A.},
					title={Handbook of mathematical functions with formulas, graphs, and
						mathematical tables},
					series={National Bureau of Standards Applied Mathematics Series},
					volume={No. 55},
					publisher={U. S. Government Printing Office, Washington, DC},
					date={1964},
					pages={xiv+1046},
				}
				
				\bib{A91}{article}{
					author={Affentranger, F.},
					title={The convex hull of random points with spherically symmetric
						distributions},
					journal={Rend. Sem. Mat. Univ. Politec. Torino},
					volume={49},
					date={1991},
					number={3},
					pages={359--383 (1993)},
				}
				
				\bib{BSF24}{article}{
					author={Bak\'o-Szab\'o, Alexandra},
					author={Fodor, Ferenc},
					title={On the variance of the mean width of random polytopes
						circumscribed around a convex body},
					journal={Mathematika},
					volume={70},
					date={2024},
					pages={Paper No. e12266, 13},
				}
				
				\bib{Bar89}{article}{
					author={B\'ar\'any, Imre},
					title={Intrinsic volumes and $f$-vectors of random polytopes},
					journal={Math. Ann.},
					volume={285},
					date={1989},
					pages={671--699},
				}
				
				\bib{B92}{article}{
					author={B\'ar\'any, Imre},
					title={Random polytopes in smooth convex bodies},
					journal={Mathematika},
					volume={39},
					date={1992},
					number={1},
					pages={81--92},
					issn={0025-5793},
				}
				
				\bib{Ba08}{article}{
					author={B{\'a}r{\'a}ny, Imre},
					title={Random points and lattice points in convex bodies},   
					journal={Bull. Amer. Math. Soc. (N.S.)},
					volume={45},
					date={2008},
					number={3},
					pages={339--365},
				}
				
				\bib{BD97}{article}{
					author={B\'ar\'any, Imre},
					author={Dalla, Leoni},
					title={Few points to generate a random polytope},
					journal={Mathematika},
					volume={44},
					date={1997},
					number={2},
					pages={325--331},
				}
				
				\bib{BFV10}{article}{
					author={B\'ar\'any, I.},
					author={Fodor, F.},
					author={V\'igh, V.},
					title={Intrinsic volumes of inscribed random polytopes in smooth convex
						bodies},
					journal={Adv. in Appl. Probab.},
					volume={42},
					date={2010},
					number={3},
					pages={605--619},
				}    
				
				\bib{BL88}{article}{
					author={B\'ar\'any, I.},
					author={Larman, D. G.},
					title={Convex bodies, economic cap coverings, random polytopes},
					journal={Mathematika},
					volume={35},
					date={1988},
					number={2},
					pages={274--291},
				}
				
				\bib{BS13}{article}{
					author={B\'ar\'any, Imre},
					author={Steiger, William},
					title={On the variance of random polygons},
					journal={Comput. Geom.},
					volume={46},
					date={2013},
					number={2},
					pages={173--180},
					issn={0925-7721},
				}
				
				\bib{BLW18}{article}{
					author={Besau, Florian},
					author={Ludwig, Monika},
					author={Werner, Elisabeth M.},
					title={Weighted floating bodies and polytopal approximation},
					journal={Trans. Amer. Math. Soc.},
					volume={370},
					date={2018},
					number={10},
					pages={7129--7148},
				}

				\bib{BT20}{article}{
					author={Besau, Florian},
					author={Th\"ale, Christoph},
					title={Asymptotic normality for random polytopes in non-Euclidean
						geometries},
					journal={Trans. Amer. Math. Soc.},
					volume={373},
					date={2020},
					number={12},
					pages={8911--8941},
				}
				
				\bib{BW16}{article}{
					author={Besau, Florian},
					author={Werner, Elisabeth M.},
					title={The spherical convex floating body},
					journal={Adv. Math.},
					volume={301},
					date={2016},
					pages={867--901},
				}
				
				\bib{BW18}{article}{
					author={Besau, Florian},
					author={Werner, Elisabeth M.},
					title={The floating body in real space forms},
					journal={J. Differential Geom.},
					volume={110},
					date={2018},
					number={2},
					pages={187--220},
					issn={0022-040X},
				}
				
				\bib{BFH10}{article}{
					author={B\"or\"oczky, K\'aroly J.},
					author={Fodor, Ferenc},
					author={Hug, Daniel},
					title={The mean width of random polytopes circumscribed around a convex
						body},
					journal={J. Lond. Math. Soc. (2)},
					volume={81},
					date={2010},
					number={2},
					pages={499--523},
				}
				
				\bib{BFRV09}{article}{
					author={B\"or\"oczky, K. J.},
					author={Fodor, F.},
					author={Reitzner, M.},
					author={V\'igh, V.},
					title={Mean width of random polytopes in a reasonably smooth convex body},
					journal={J. Multivariate Anal.},
					volume={100},
					date={2009},
					number={10},
					pages={2287--2295},
				}
				
				\bib{C70}{article}{
					author={Carnal, H.},
					title={Die konvexe H\"ulle von $n$ rotationssymmetrisch verteilten
						Punkten},
					journal={Z. Wahrscheinlichkeitstheorie und Verw. Gebiete},
					volume={15},
					date={1970},
					pages={168--176},
				}
				
				\bib{Cha08}{article}{
					author={Chatterjee, Sourav},
					title={A new method of normal approximation},
					journal={Ann. Probab.},
					volume={36},
					date={2008},
					number={4},
					pages={1584--1610},
				}
				
				\bib{D91}{article}{
					author={Dwyer, Rex A.},
					title={Convex hulls of samples from spherically symmetric distributions},
					note={First Canadian Conference on Computational Geometry (Montreal, PQ,
						1989)},
					journal={Discrete Appl. Math.},
					volume={31},
					date={1991},
					number={2},
					pages={113--132},
				}
				
				\bib{EG81}{article}{
					author={Eddy, William F.},
					author={Gale, James D.},
					title={The convex hull of a spherically symmetric sample},
					journal={Adv. in Appl. Probab.},
					volume={13},
					date={1981},
					number={4},
					pages={751--763},
				}
				
				\bib{ES81}{article}{
					author={Efron, B.},
					author={Stein, C.},
					title={The jackknife estimate of variance},
					journal={Ann. Statist.},
					volume={9},
					date={1981},
					number={3},
					pages={586--596},
					issn={0090-5364},
				}

				\bib{FGV22}{article}{
					author={Fodor, Ferenc},
					author={Gr\"unfelder, Bal\'azs},
					author={V\'igh, Viktor},
					title={Variance bounds for disc-polygons},
					journal={Doc. Math.},
					volume={27},
					date={2022},
					pages={1015--1029},
				}
				
				\bib{FP24}{article}{
					author={Fodor, Ferenc},
					author={Papv\'ari, D\'aniel I.},
					title={A Central Limit Theorem for Random Disc-Polygons in Smooth Convex Discs},
					journal={Discrete \& Computational Geometry},
					date={2024},
					pages={1--18},
				}
				
				\bib{H11}{article}{
					author={Hashorva, Enkelejd},
					title={Asymptotics of the convex hull of spherically symmetric samples},
					journal={Discrete Appl. Math.},
					volume={159},
					date={2011},
					number={4},
					pages={201--211},
				}
				
				\bib{H00}{book}{
					author={Hug, D.},
					title={Measures, curvatures and currents in convex geometry.},
					date = {2000},
					publisher = {Habilitationsschrift},
					place = {Univ. Freiburg},
				}
				
				\bib{K23}{article}{
					author={Kabluchko, Zakhar},
					title={On expected face numbers of random beta and beta$'$ polytopes},
					journal={Beitr. Algebra Geom.},
					volume={64},
					date={2023},
					number={1},
					pages={155--174},
				}
				
				\bib{KP25}{article}{
					title={A refinement of the Sylvester problem: Probabilities of combinatorial types}, 
					author={Kabluchko, Zakhar},
					author={Panzo, Hugo},
					year={2025},
					eprint={https://arxiv.org/abs/2501.16166},
					url={https://arxiv.org/abs/2501.16166}, 
				}
				
				\bib{KS25}{article}{
					title={Beta Polytopes and Beta Cones: An Exactly Solvable Model in Geometric Probability}, 
					author={Kabluchko, Zakhar},  
					author={Steigenberger, David Albert},
					year={2025},
					eprint={https://arxiv.org/abs/2503.22488},
					url={https://arxiv.org/abs/2503.22488}, 
				}
				
				\bib{KTT19}{article}{
					author = {Kabluchko, Zakhar},
					author = {Temesvari, Daniel},
					author = {Th\"ale, Christoph},
					title = {Expected intrinsic volumes and facet numbers of random beta-polytopes},
					journal = {Mathematische Nachrichten},
					volume = {292},
					number = {1},
					pages = {79-105},
					year = {2019}
				}
				
				\bib{KTZ20}{article}{
					author={Kabluchko, Zakhar},
					author={Th\"ale, Christoph},
					author={Zaporozhets, Dmitry},
					title={Beta polytopes and Poisson polyhedra: $f$-vectors and angles},
					journal={Adv. Math.},
					volume={374},
					date={2020},
					pages={107333, 63},
				}
				
				\bib{Kuf94}{article}{
					author={K\"ufer, K.-H.},
					title={On the approximation of a ball by random polytopes},
					journal={Adv. in Appl. Probab.},
					volume={26},
					date={1994},
					number={4},
					pages={876--892},
				}
				
				\bib{LRP17}{article}{
					author={Lachi\`eze-Rey, Rapha\"el},
					author={Peccati, Giovanni},
					title={New Berry-Esseen bounds for functionals of binomial point
						processes},
					journal={Ann. Appl. Probab.},
					volume={27},
					date={2017},
					number={4},
					pages={1992--2031},
				}
				
				\bib{M71}{article}{
					author={Miles, R. E.},
					title={Isotropic random simplices},
					journal={Adv. in Appl. Probab.},
					volume={3},
					date={1971},
					pages={353--382},
				}
				
				\bib{R03}{article}{
					author = {Reitzner, Matthias},
					year = {2003},
					pages = {2136--2166},
					title = {Random polytopes and the Efron--Stein jackknife inequality},
					volume = {31},
					journal = {Ann. Probab.},
				}
				
				\bib{R05}{article}{
					author={Reitzner, Matthias},
					title={Central limit theorems for random polytopes},
					journal={Probab. Theory Related Fields},
					volume={133},
					date={2005},
					number={4},
					pages={483--507},
					issn={0178-8051},
				}
				
				\bib{R05b}{article}{
					author={Reitzner, Matthias},
					title={The combinatorial structure of random polytopes},
					journal={Adv. Math.},
					volume={191},
					date={2005},
					number={1},
					pages={178--208},
				}
				
				\bib{R10}{article}{
					author={Reitzner, Matthias},
					title={Random polytopes},
					conference={
						title={New perspectives in stochastic geometry},
					},
					book={
						publisher={Oxford Univ. Press, Oxford},
					},
					date={2010},
					pages={45--76},
				}
				
				\bib{RS25}{article}{
					author={Reitzner, Matthias},
					author={Sonnleitner, Mathias},
					title={Central limit theorems for random boundary polytopes}, 
					year={2025},
					eprint={https://arxiv.org/abs/2509.20058},
					url={https://arxiv.org/abs/2509.20058}, 
				}
				
				
				\bib{Sch14}{book}{
					author={Schneider, Rolf},
					title={Convex bodies: the Brunn-Minkowski theory},
					volume={151},
					edition={expanded edition},
					publisher={Cambridge University Press, Cambridge},
					date={2014},
					pages={xxii+736},
				}
				
				\bib{Sch18}{article}{
					author={Schneider, Rolf},
					title={Discrete aspects of stochastic geometry},
					conference={
						title={Handbook of discrete and computational geometry, 3rd ed.},
					},
					book={
						publisher={CRC, Boca Raton, FL},
					},
					date={2018},
					pages={299--329},
				}
				
				\bib{Sch22}{book}{
					author={Schneider, Rolf},
					title={Convex cones: geometry and probability},
					series={Lecture Notes in Mathematics},
					volume={2319},
					publisher={Springer, Cham},
					date={2022},
					pages={x+347},
				}
				
				\bib{SW08}{book}{
					author={Schneider, Rolf},
					author={Weil, Wolfgang},
					title={Stochastic and integral geometry},
					series={Probability and its Applications (New York)},
					publisher={Springer-Verlag, Berlin},
					date={2008},
					pages={xii+693},
					isbn={978-3-540-78858-4},
				}
				
				\bib{S94}{article}{
					author={Sch\"utt, Carsten},
					title={Random polytopes and affine surface area},
					journal={Math. Nachr.},
					volume={170},
					date={1994},
					pages={227--249},
					issn={0025-584X},
				}
				
				\bib{SW04}{article}{
					author={Sch\"utt, Carsten},
					author={Werner, Elisabeth},
					title={Surface bodies and $p$-affine surface area},
					journal={Adv. Math.},
					volume={187},
					date={2004},
					number={1},
					pages={98--145},
					issn={0001-8708},
				}
				
				\bib{SW23}{article}{
					author={Sch\"utt, Carsten},
					author={Werner, Elisabeth M.},
					title={Affine surface area},
					conference={
						title={Harmonic analysis and convexity},
					},
					book={
						series={Adv. Anal. Geom.},
						volume={9},
						publisher={De Gruyter, Berlin},
					},
					isbn={978-3-11-077537-2},
					isbn={978-3-11-077538-9},
					isbn={978-3-11-077543-3},
					date={2023},
					pages={427--444},
				}
				
				\bib{Ste86}{book}{
					author={Stein, Charles},
					title={Approximate computation of expectations},
					series={Institute of Mathematical Statistics Lecture Notes---Monograph
						Series},
					volume={7},
					publisher={Institute of Mathematical Statistics, Hayward, CA},
					date={1986},
					pages={iv+164},
				}
				
				\bib{Th18}{article}{
					author={Th\"ale, Christoph},
					title={Central limit theorem for the volume of random polytopes with
						vertices on the boundary},
					journal={Discrete Comput. Geom.},
					volume={59},
					date={2018},
					number={4},
					pages={990--1000},
				}
				
				\bib{TTW18}{article}{
					author={Th\"ale, Christoph},
					author={Turchi, Nicola},
					author={Wespi, Florian},
					title={Random polytopes: central limit theorems for intrinsic volumes},
					journal={Proc. Amer. Math. Soc.},
					volume={146},
					date={2018},
					number={7},
					pages={3063--3071},
				}

				\bib{Vu05}{article}{
					author={Vu, V. H.},
					title={Sharp concentration of random polytopes},
					journal={Geom. Funct. Anal.},
					volume={15},
					date={2005},
					number={6},
					pages={1284--1318},
				}

				\bib{WW93}{article}{
					author={Weil, Wolfgang},
					author={Wieacker, John A.},
					title={Stochastic geometry},
					conference={
						title={Handbook of convex geometry, Vol. A, B},
					},
					book={
						publisher={North-Holland, Amsterdam},
					},
					date={1993},
					pages={1391--1438},
				}
				
			\end{biblist}
		\end{bibdiv}
		
	\end{document}